\numberwithin{equation}{section}
\newtheorem{MainThm}{Pretheorem}
\theoremstyle{definition}
\newtheorem{defn}[equation]{Definition}
\newtheorem{remark}[equation]{Remark}
\newtheorem{example}[equation]{Example}
\theoremstyle{plain}
\newtheorem{lem}[equation]{Lemma}
\newtheorem{thm}[equation]{Theorem}
\newtheorem{prop}[equation]{Proposition}
\newtheorem{corollary}[equation]{Corollary}
\newcommand{\Mfds}{\mathbf{Mfds}}
\newcommand{\Set}{\mathbf{Set}}
\newcommand{\sSet}{\mathbf{sSet}}
\newcommand{\Top}{\mathbf{Top}}
\newcommand{\Sheaf}{\mathbf{Sheaves}}
\newcommand{\Cob}{\mathbf{Cob}}
\newcommand{\sh}{\mathrm{sh}}
\newcommand{\Ob}{\mathrm{Ob}}
\newcommand{\Sing}{\mathrm{Sing}}
\newcommand{\Th}{\mathrm{Th}}
\newcommand{\MT}{\mathrm{MT}}
\newcommand{\GRW}{\mathrm{GRW}}
\newcommand{\loopinf}[1]{\Omega^{\infty #1}}
\newcommand{\bD}{\mathbb{D}}
\newcommand{\bN}{\mathbb{N}}
\newcommand{\bR}{\mathbb{R}}
\newcommand{\bS}{\mathbb{S}}
\newcommand{\bZ}{\mathbb{Z}}
\newcommand{\bK}{\mathbb{K}}
\newcommand{\gA}{\mathbf{A}}
\newcommand{\ev}{\mathrm{ev}}
\newcommand{\cK}{\mathbb{K}}
\newcommand{\cF}{\mathcal{F}}
\newcommand{\cC}{\mathcal{C}}
\newcommand{\cD}{\mathcal{D}}
\newcommand{\cB}{\mathcal{B}}
\newcommand{\cG}{\mathcal{G}}
\newcommand{\cS}{\mathcal{S}}
\newcommand{\cL}{\mathcal{L}}
\newcommand{\bott}{\operatorname{bott}}
\newcommand{\cTh}{\mathcal{T}}
\newcommand{\thom}{\operatorname{thom}}
\newcommand{\Lin}{\mathbf{Lin}}
\newcommand{\Cl}{\mathbf{Cl}}
\newcommand{\colim}{\operatorname{colim}}
\newcommand{\conn}{\operatorname{conn}}
\newcommand{\ins}{\operatorname{ins}}
\newcommand{\scan}{\mathrm{scan}}
\newcommand{\hocolim}{\mathrm{hocolim}}
\newcommand{\Diff}{\mathrm{Diff}}
\newcommand{\eps}{\epsilon}
\newcommand{\End}{\operatorname{End}}
\newcommand{\im}{\operatorname{Im}}
\newcommand{\ind}{\operatorname{index}}
\newcommand{\topind}{\operatorname{topind}}
\newcommand{\id}{\operatorname{id}}
\newcommand{\Kom}{\mathbf{Kom}}
\newcommand{\normalize}[1]{\frac{#1}{(1+{#1}^{2})^{1/2}}}
\newcommand{\pr}{\operatorname{pr}}
\newcommand{\gC}{\mathbf{C}}
\newcommand{\gx}{\mathbf{x}}
\newcommand{\gy}{\mathbf{y}}
\newcommand{\gv}{\mathbf{v}}
\newcommand{\Gr}{\mathrm{Gr}}
\newcommand{\PT}{\mathrm{PT}}
\newcommand{\spec}{\operatorname{spec}}
\newcommand{\Spin}{\mathrm{Spin}}
\newcommand{\scpr}[1]{\langle #1 \rangle}
\newcommand{\ad}{\mathrm{ad}}
\newcommand{\cstar}{\mathrm{C}^{\ast}}
\newcommand{\Dir}{\slashed{\mathfrak{D}}}
\newcommand{\spinor}{\slashed{\mathfrak{S}}}
\newcommand{\smb}{\mathrm{smb}}
\newcommand{\SO}{\mathrm{SO}}
\newcommand{\norm}[1]{\| #1 \|}
\newcommand{\KO}{KO}
\newcommand{\Vect}{\mathcal{V}}
\title[Index theory in spaces of manifolds]{Index theory in spaces of manifolds}
\author{Johannes Ebert}
\thanks{Partially supported by the SFB 878 ``Groups, Geometry and Actions''}
\address{Mathematisches Institut, Universit\"at M\"unster\\
Einsteinstra{\ss}e 62\\
48149 M\"unster\\
Bundesrepublik Deutschland}
\email{johannes.ebert@uni-muenster.de}
\date{\today}
\keywords{Continuous fields of Hilbert-$\gA$-modules, Functional calculus of unbounded operators on Hilbert modules, Dirac operators, $K$-Theory of $C^*$-algebras, Clifford algebras, Spaces of manifolds, Thom spectra, Madsen-Tillmann-Weiss spectrum, Atiyah-Singer index theorem}
\begin{document}

\begin{abstract}
We formulate and prove a generalization of the Atiyah-Singer family index theorem in the context of the theory of spaces of manifolds \`a la Madsen, Tillmann, Weiss, Galatius and Randal-Williams. Our results are for Dirac-type operators linear over arbitrary $C^*$-algebras.
\end{abstract}

\maketitle

\tableofcontents

\section{Introduction}

The historically first proof of the Atiyah-Singer index theorem \cite{AS0}, \cite{Pal} established an intimate relation between cobordism theory and index theory of elliptic operators. It relied on the cobordism invariance of the index and Thom's computation of the rational oriented cobordism ring; the better known $K$-theoretic proof \cite{ASIoeoI,ASIoeoIV} eliminated the dependence on cobordism theory.

During the last 15 years, we have witnessed a revival of the geometric aspects of cobordism theory, starting from \cite{MW}, made more explicit in \cite{GMTW} and further developed in \cite{GRW10}. 
This geometric theory concerns the $d$-dimensional \emph{cobordism category of $\theta$-manifolds} $\Cob^{\theta} (d)$, where $\theta:Y \to BO(d)$ is a fibration. Galatius, Madsen, Tillmann and Weiss proved in \cite{GMTW} that $B \Cob^{\theta} (d) \simeq \Omega^{\infty-1} \MT \theta(d)$, where $\MT \theta(d)$ is the Thom spectrum of the additive inverse of the vector bundle classified by $\theta$. For a closed $d$-manifold $M$, there is a map $\alpha:B\Diff_\theta (M)\to \Omega B \Cob^{\theta}(d)\simeq \Omega^{\infty} \MT \theta(d)$ from the classifying space for $M$-bundles with $\theta$-structure. 

For some $\theta$, there are natural elliptic operators living on $\theta$-manifolds (for example, the Cauchy-Riemann operator on Riemann surfaces, the signature operator on oriented manifolds and the spin Dirac operator on spin manifolds).
It was observed no later than \cite{MT} that some of the homotopy theoretic constructions around the spectrum $\MT \SO(2)$ admit interpretations in terms of the index of the Cauchy-Riemann operator, as a consequence of the Atiyah-Singer theorem. This observation was later systematized by the author \cite{EbertV} and used many times, e.g. in \cite{BERW}. In each of these situations, there is a spectrum map $\MT \theta \to \bK$ to the $K$-theory spectrum or some (de)suspension thereof, defined using the underlying linear algebra. The index theorem implies that the composition of $\alpha$ with that map is homotopic to the classifying map for the family index of the operators under consideration. 

The main result of \cite{GMTW} provides a geometric representation of the space $\loopinf{-1} \MT \theta(d)$ in terms of spaces of $\theta$-manifolds. This suggests the possibility of finding a \emph{proof} of the index theorem using the techniques introduced in \cite{GMTW}, and the purpose of this paper is to present such a proof. At the same time, we give a generalization of the index theorem to families of noncompact manifolds. Let us describe the idea. 

For sake of concreteness, suppose we wish to compute the family index of the spin Dirac operator $\Dir$ on a bundle of $d$-dimensional closed spin manifolds $\pi:M \to X$. The Dirac operator is linear over the Clifford algebra $\Cl^{d,0}$, and so we expect it to have an index\footnote{We denote the $K$-theory spectrum of a Real graded $\cstar$-algebra $\gA$ by $\bK(\gA)$ and the $n$th space in this spectrum by $\bK (\gA)_n$. Hence $[X; \bK (\Cl^{d,0})_n] \cong KO^{n-d}(X)$.} $\ind(\Dir)\in KO^{-d} (X) = [X; \bK (\Cl^{d,0})_0]$. 

At the heart of the new geometric cobordism theory, there are two spectra (in the sense of homotopy theory) $\MT\Spin (d)$ and $\GRW\Spin (d)$. The spectrum $\MT\Spin (d)$ is nowadays quite well-known, so let us focus on the other one, which was introduced (with a different name) by Galatius and Randal-Williams \cite{GRW10}. The $0$th space $\GRW \Spin (d)_0$ of $\GRW \Spin (d)$ is the space of all closed $d$-dimensional spin manifolds. There is a suitable topology on $\GRW \Spin (d)_0$ \cite{GRW10}, and with this topology, $\GRW \Spin (d)_0$ becomes a classifying space for fibre bundles of $d$-dimensional closed spin manifolds. Hence the bundle $\pi$ corresponds to a map $\lambda_{\pi}:X \to \GRW \Spin (d)_0$, unique up to homotopy. 
The usual stability properties of the Fredholm index imply that $\ind (\Dir) \in KO^{-d} (X)$ only depends on the homotopy class of $\lambda_{\pi}$ (and not on data such as fibrewise Riemannian metrics which enter the definition of $\Dir$). 
One can go a step further, and define a universal index map $\ind_0:\GRW \Spin (d)_0 \to  \bK (\Cl^{d,0}) \simeq \loopinf{+d} \KO$ in terms of analysis. The goal of the index theorem is to obtain a topological formula for $\ind_0$.

The main idea of the present paper is to extend $\ind_0$ to a spectrum map $\GRW \Spin (d) \to \bK (\Cl^{d,0})$ and to take advantage of the results of \cite{GRW10} to compute it in terms of homotopy theory. 
A point in the $n$th space $\GRW\Spin (d)_n$ of the spectrum $\GRW \Spin (d)$ is a noncompact $d$-dimensional spin manifold $M$, equipped with a proper ``control map'' $f: M \to \bR^n$, and a Riemannian metric. Pretending for a moment that $M $ is complete (which is not the case in general), the Dirac operator on such an $M$ is essentially self-adjoint, and we may form the bounded transform $F:=\normalize{\Dir}$. However, unless $n=0$ (and hence $M$ compact), $F$ will not be a Fredholm operator, and hence does not have an index in $K (\Cl^{d,0})$. But there is an index, which lives in another $K$-group. To see which one, we take guidance from Kasparov theory (even though in the end our results are formulated and proven without referring to Kasparov theory).

An important result in the analysis of elliptic operators on noncompact manifolds (see e.g. \cite[\S 10]{HR}) states that for each compactly supported function $h $ on $\bR^n$, the operator $(h\circ f)(F^2 -1)$ is compact. That is, $\Dir$ defines a class in the Kasparov group $KK(\gC_0 (\bR^n); \Cl^{d,0})\cong KO^{n-d}(*)$. 
Instead of this group, we shall use an isomorphic group to store the information about the operators on $M$, namely $KK (\Cl^{n,0},\Cl^{d,0})$. The isomorphism $KK(\gC_0 (\bR^n); \Cl^{d,0}) \cong KK (\Cl^{n,0},\Cl^{d,0})$ is given abstractly by an intersection product, but we can give a very concrete and simple description of the image of the class of $\Dir$, using a kind of ``dual Dirac'' element. We replace $\Dir$ by an operator $\Dir'$ with an extra Clifford symmetry and compact resolvent, so that $\normalize{\Dir'}$ is Fredholm. 
By Kasparov's Bott periodicity theorem, we know that this process does not loose index-theoretic information. The construction can be carried out in the parametrized setting, and we obtain index maps 
\begin{equation}\label{intro:controlled-index}
\ind_n : \GRW \Spin (d)_n \to \bK (\Cl^{d,0})_n \simeq \loopinf{-n} \bK (\Cl^{d,0}).
\end{equation}
The construction relies on the generalization of the classical regularity theory for elliptic operators which the author developed in \cite{JEIndex1} (to use these analytical results, we have to replace the source of \eqref{intro:controlled-index} by a homotopy equivalent space, but let us ignore this techical point for now). 
Now both, target and source are the $n$th space of spectra. On the right hand side, the structure maps are given by the Bott maps (or appropriate versions thereof). On the left-hand side, the structure map is a fairly tautological construction (sometimes called ``scanning map''), which might be described as follows. Let $M$ be a manifold with a proper control map $f: M \to \bR^n$. For each $t \in \bR$, we get a new control map $(f,t): M \to \bR^n \times \bR$. As $t$ runs from $-\infty$ to $+\infty$, we get a family of manifolds (all equal to $M$) and control maps, namely $(f,t)$. The topology on $\GRW \Spin (d)_{n+1}$ is designed in such a fashion that this family can be completed at $\pm \infty$ by adding the empty manifold. The construction of this scanning map, the Bott map and the index fit together so that the collection $(\ind_n)_n$ is a map of spectra
\begin{equation}\label{intro:controlled-index2}
\ind : \GRW \Spin (d) \to \bK (\Cl^{d,0}).
\end{equation}
(not quite: it is only a weak map in the sense of \ref{lem:strictification-of-spectrummaps} below; the reason is that certain canonical isomorphisms are not identities). 

The space $\MT \Spin (d)_n$ can be viewed as a subspace of $\GRW \Spin(d)_n$, namely the space of all linear submanifolds contained in $\bR^n$ (a linear submanifold is a, possibly empty, affine subspace). The inclusion maps $\MT \Spin (d)_n \to \GRW\Spin(d)_n$ together give a map of spectra. The key result about this map is that it is a stable equivalence of spectra \cite{GRW10}. This reduces the computation of the spectrum map \eqref{intro:controlled-index2} to the much smaller spectrum $\MT\Spin(d)$. This is a fairly straightforward task, using the Thom isomorphism theorem in $K$-theory and the computation of the spectrum of the supersymmetric harmonic oscillator.

We will not only prove an index theorem for the spin Dirac operator, but for all other operators of Dirac type, and they are allowed to be linear over arbitrary, possibly graded and Real $C^*$-algebras $\gA$ (for example group $C^*$-algebras).
In that case, we have to replace $\GRW \Spin (d)$ by a spectrum $\GRW \theta_{\gA} (d)$; a point in the $n$th space is a triple $(M,f,E)$, with $M$ a manifold, $f: M \to \bR^n$ a proper map and $E \to M$ a bundle of graded finitely generated projective $\gA$-modules, together with a $\Cl (TM)$-structure. This spectrum fits into the general framework of \cite{GRW10}, in particular, there is a Thom spectrum $\MT \theta_{\gA} (d)$ and a weak equivalence of spectra $\Lambda:\MT \theta_{\gA} (d) \to \GRW \theta_{\gA} (d)$. The $K$-theory spectrum $\bK(\Cl^{d,0})$ is replaced by $\bK(\gA)$, the $K$-theory spectrum of the graded $C^*$-algebra $\gA$.

Let us now formulate the main results of this paper in rough terms (compare \cite[p. 45]{Adams} for our usage of the word ``pretheorem''). 
The spectrum $\MT \theta_\gA(d)$ is a Thom spectrum, and there is a Thom class which is a (weak) map of spectra
\[
 \topind: \MT \theta_\gA (d) \to \bK(\gA),
\]
the \emph{topological index}. 

\begin{MainThm}[Precise statement given in Proposition \ref{analytical-index-vs-scanning} and Theorem \ref{thm:index-theorem}]\label{main-theorem}
For each graded Real $C^*$-algebra $\gA$, there is a weak spectrum map 
\[
\ind: \GRW \theta_{\gA} (d) \to \bK \gA.
\]
On the $0$th space, it classifies the ordinary family index of $\gA$-linear Dirac operators. 
The composition of $\ind$ with the natural equivalence $\Lambda: \MT \theta_{\gA} (d) \to \GRW \theta_{\gA} (d)$ is homotopic (as weak maps of spectra) to $\topind$. 
\end{MainThm}

There is a corollary of Theorem \ref{main-theorem} which looks more closely related to the classical index theorem. We define a map
\[
 \PT_n: \GRW \theta_\gA(d)_n \stackrel{\tau_n}{\to} \Omega^{\infty-n}\GRW \theta_\gA(d) \stackrel{p_n}{\to} \Omega^{\infty-n} \MT \gA(d)
\]
as the composition of the map $\tau_n$ given by the spectrum structure and a homotopy inverse $p_n$ to the homotopy equivalence $ \Omega^{\infty-n} \Lambda : \Omega^{\infty-n} \MT \theta_{\gA}(d) \to \Omega^{\infty-n} \GRW\theta_\gA(d)$ (this map can also be constructed by a parametrized Pontrjagin-Thom construction). 
\begin{MainThm}[Precise statement given in Corollary \ref{cor:indextheorem-space-version}]\label{main-cor}
In the situation of Theorem \ref{main-theorem}, the two maps
\[
( \GRW\theta_\gA)_n \to \bK(\gA)_n \simeq \Omega^{\infty-n} \bK(\gA)
\]
given by $\ind_n$ and $(\Omega^{\infty-n}\topind) \circ PT_n$ are homotopic.
\end{MainThm}

\begin{remark}
The classical formulation of the index theorem for real operators involves Atiyah's $KR$-theory. In this paper, there is no $KR$-theory. This is possible since we only consider operators of Dirac type, and for those, the appearance of $KR$-theory can be eliminated, at the expense of introducing a mildly twisted version of $K$-theory. Let us explain this in the simplest situation. Let $M^d\subset \bR^n$ be a closed manifold and $E \to M$ a real $\Cl(TM)$-bundle. The symbol class $\sigma(D)$ of the Dirac operator $D$ on $E$ is an element in $KR_c (TM^-)$, and $E$ itself defines an element $[E] \in K^{TM} (M)$ (see Definition \ref{defn:twistedKgroup} below for the definition of this twisted $K$-group), which maps to $\sigma(D)$ under the Thom isomorphism $K^{TM}(M) \to KR_c (TM^-)$. The classical index theorems can be stated by saying that $[E]$ maps to $\ind (D)$ under the composition
\[
K^{TM}(M) \cong K^{TM^- \oplus NM \oplus NM^-} (M) \cong KO^n_c (NM) \to KO^n_n (\bR^n) \cong \bZ
\]
of a Morita equivalence isomorphism, the Thom isomorphism, the pushforward along open embeddings and the Bott periodicity isomorphism. In this formulation, no $KR$-group shows up explicitly.
If one would like to prove an index theorem for more general operators than Dirac operators (e.g. pseudo-differential operators), this trick would not be available. There are also analytical difficulties with treating more general operators, and we refrain from considering them.
\end{remark}

\begin{remark}
If one allows arbitrary coefficient $C^*$-algebras $\gA$, our index theorem provides generalizations of the classical results by Mishchenko  and Fomenko \cite{MF}. In particular, the present paper proves a family version of the index theorem of \cite{MF}, even for graded $C^*$-algebras. Even though this is certainly an expected result, it does not seem to be documented in the literature. It could be proven using Kasparov's $KK$-theory, following the line of argument by Connes and Skandalis \cite{CoSk}. 
\end{remark}

\begin{remark}
Let us explain the meaning of $\ind_n$ in a simple situation (taking as an example the spin Dirac operator). A point in $\GRW \Spin(d)_n$ is a pair $(M,f)$, consisting of a $d$-dimensional noncompact spin manifold $M$, and a proper smooth map $f: M \to \bR^n$. Then $\ind_n (M,f) \in \bK(\Cl^{d,0})_n$, and let $\mathrm{ind}_n (M,f) \in \pi_0 (\bK(\Cl^{d,0})_n) \cong KO_{n-d}(*)$ be the path component of $\ind_n (M,f)$. To compute this element, choose a regular value $a \in \bR^n$ of $f$ and let $N:= f^{-1}(a)$, which is a closed $(d-n)$-dimensional spin manifold. Let $\pr_2:N \times \bR^n \to \bR^n$ be the projection map. The topology of $\GRW\Spin (d)_n$ is designed in such a way that $(M,f)$ and $(N \times \bR^n,\pr_2)$ lie in the same component of $\GRW \Spin (d)_n$. Therefore $\mathrm{ind}_n (M,f) = \mathrm{ind}_n (N \times \bR^n,\pr_2)$. One can show that $\mathrm{ind}_n (N \times \bR^n,\pr_2)$ is the index of the spin Dirac operator on the closed manifold $N$.
For $n=1$, this can be interpreted as an instance of the ``partitioned manifold index theorem'' of Roe \cite[Theorem 3.3]{Roe-partitioned}, see also \cite[Theorem 1.5]{Higson-cobordism}. 
We are not aware of a simple description of $\ind_n$ in the parametrized situation.
\end{remark}

{\bf Outline of the paper.} The purpose of section \ref{sec:background} is to gather the topological results we need. When dealing with spaces of manifolds, it is convenient to use the abstract sheaf-theoretic language used by Madsen and Weiss \cite{MW}, so we recall this in subsection \ref{subsec:homotopytheory}. We then proceed to survey results of Galatius and Randal-Williams from \cite{GRW10} and put them into the form we need (stated as Theorem \ref{grwtheorem2} and \ref{grwtheorem1} below). 
In section \ref{sec:construction-index-amp}, we construct the spectrum map $\ind: \GRW \theta_{\gA} \to \bK \gA$. The analytical work in \cite{JEIndex1} was carried out with that goal in mind, so that the construction is pretty straightforward. 
Section \ref{sec:index-theorem} contains the proof of Theorem \ref{main-theorem}. 

\section{Background material}\label{sec:background}

\subsection{The language of sheaves}\label{subsec:homotopytheory}

The results of this paper involve spaces whose points are manifolds (equipped with extra data, such as Dirac operators). In \cite{GRW10}, a topology on such spaces is constructed. For our purposes, it is more convenient to avoid delicate questions in point-set topology by following the functor-of-points-philosophy. More precisely, we shall use the formalism of sheaves as in \cite[\S 2.1, 2.4]{MW}, which we now briefly recall. 

Let $\Mfds$ be the category of smooth manifolds and smooth maps, referred to as \emph{test manifolds}. A \emph{sheaf} is a contravariant functor $\cF:\Mfds \to \Set$ which satisfies the usual gluing condition. That is, if $(U_i)_{i \in I}$ is an open cover of a test manifold $X$ and if $z_i \in \cF (U_i)$ are elements such that for each pair $(i,j) \in I^2$ we have\footnote{We denote the pullback along inclusions by the restriction symbol.} $z_i |_{U_i \cap U_j} = z_j|_{U_i \cap U_j}$, then there is a unique $z \in \cF (X)$ with $z|_{U_i} =z_i$.

One might think of $\cF$ as a space whose points are the elements of $\cF(*)$, and elements $z \in \cF(X)$ induce continuous maps $X \to \cF(*)$, $x \mapsto z|_{\{x\}}$. To get a grasp of the definitions/statements/argu\-ments that follow, we advise the reader to secretly put $X=*$ on the first reading.

Sheaves on $\Mfds$ form a category $\Sheaf$, and there is a functor $\Sheaf\to \sSet$ to the category of simplicial sets, defined as follows. Let $\Delta^p_e := \{ x \in \bR^{p+1} \vert \sum_{i=0}^{p} x_i =1\}$ be the ``extended $p$-simplex''. Then $p \mapsto \cF (\Delta^p_e)$ is a simplicial set, denoted $\cF_{\bullet}$. The \emph{representing space of $\cF$} is by definition the geometric realization $|\cF|:= | \cF_{\bullet}|$ of this simplicial set.

A topological space $Y \in \Ob (\Top)$ defines a sheaf $\sh (Y)$, namely $\sh (Y)(X):= \{f: X \to Y \vert f\text{ continuous }\}$. The simplicial set $\sh (Y)_{\bullet}$ is the extended singular simplicial set $\Sing^e_{\bullet} Y$ of $Y$, defined using extended simplices. There is an obvious map $\Sing^e_{\bullet} Y \to \Sing_{\bullet} Y$ which is a weak equivalence of simplicial sets. 

A \emph{concordance} between two elements $z_0,z_1 \in \cF (X)$ is an element $z \in \cF (X \times \bR)$ such that $z|_{X \times \{i\}}=z_i$ for $i=0,1$. Concordance is an equivalence relation, and the set of concordance classes is denoted $\cF[X]$. 
It is proven in \cite[Proposition 2.17]{MW} that there is a natural bijection 
\begin{equation}\label{eqn:sheaf-representing}
\cF[X] \cong [X; |\cF|], 
\end{equation}
for each $X \in \Mfds$. We say that a map $\cF \to \cG$ of sheaves is $n$-connected (or a weak equivalence) if the induced map $|\cF| \to |\cG|$ is $n$-connected (or a homotopy equivalence). 

Let $\cF^{\bR}$ be the sheaf $\cF^{\bR} (X):= \cF(X \times \bR)$. It comes with evaluation maps $\ev_t: \cF^\bR \to \cF$, $z \mapsto z|_{X \times \{t\}}$. A \emph{homotopy} or \emph{natural concordance} between two maps $F_0,F_1: \cF \to \cG$ of sheaves is a map $F: \cF \to \cG^\bR$ such that $\ev_i \circ F=F_i$. 

A \emph{basepoint} of a sheaf $\cF$ is a morphism $z_0:* \to \cF$ from the initial sheaf (this is the same information as a consistent choice of basepoints of the sets $\cF(X)$). If $(\cF,z_0)$ is a pointed sheaf, we define the \emph{loop sheaf} $\Omega_{z_0} \cF$ as follows: $\Omega_{z_0} \cF(X) $ is the set of all $z \in \cF ( X \times \bR)$ with $z|_{X\times \{i\}}=z_0$ for $i=0,1$. If the basepoint $z_0$ is understood, it is dropped from the notation. There is a map of simplicial sets
\[
 \phi_\bullet: \Delta^1_\bullet \times (\Omega \cF)_\bullet \to \cF_\bullet
\]
defined by $(\alpha,z) \mapsto \tilde{\alpha}^* z$. To understand the notation, let $\alpha \in \Delta^1_p$ be a $p$-simplex. It induces an affine map $\Delta^p_e \to \Delta^1_e$, and so $\tilde{\alpha}: \Delta_e^p \to \Delta_e^1 \times \Delta_e^p = \bR \times \Delta_e^p$. The geometric realization of $\phi_\bullet$ is a map $\Delta^1 \times |\Omega \cF|\to |\cF|$ which descends to a pointed map $S^1 \wedge |\Omega \cF| \to |\cF|$, whose adjoint is a map
\begin{equation}\label{map-loop-space-sheafworlojjhs}
 \Phi: |\Omega \cF| \to \Omega |\cF|.
\end{equation}
Using \cite[Proposition 2.17]{MW}, one shows that $\Phi$ is a homotopy equivalence. 

\begin{remark}\label{rem:chage-coordinates-for-loopsheaf}
It is useful for us to change coordinates in the $\bR$-direction: $\overline{\bR}=[-\infty,\infty]$ is a manifold with boundary, and after adding an external collar to $\overline{\bR}$, one obtains the manifold $\widehat{\bR}$. We identify $\cF(X \times \bR)$ and $\cF(X \times \widehat{\bR})$ by means of an orientation-preserving diffeomorphism $h: \bR \to \widehat{\bR}$ with $h ([0,1]) = \overline{\bR}$ and $h((0,1)) = \bR$. Using this identification, we think of elements in $(\Omega \cF)(X)$ as elements of $\cF (X \times \widehat{\bR} )$ which restrict to $z_0$ on $X \times \{\pm \infty \}$.
\end{remark}

A \emph{spectrum of sheaves} is a sequence of pointed sheaves $\cF_n$, $n \geq 0$, and connecting maps $\epsilon_n:\cF_n \to \Omega \cF_{n+1}$. It is called an \emph{$\Omega$-spectrum} if all $\epsilon_n$ are weak equivalences. Taking representing spaces and using the maps \eqref{map-loop-space-sheafworlojjhs}, a spectrum of sheaves induces a spectrum of topological spaces. The \emph{$n$th infinite loop space} of the spectrum $A$ is the homotopy colimit (aka mapping telescope)
\[
 \Omega^{\infty-n} A := \hocolim_r \Omega^{r-n} A_r,
\]
and a spectrum map $T:A \to B$ induces maps $\Omega^{\infty-n} T$ of infinite loop spaces. There is a tautological map $\tau_n:A_n \to \Omega^{\infty-n}A$ which is a weak equivalence if $A$ is an $\Omega$-spectrum. Note that $\Omega^{\infty-n} T \circ \tau_n = \tau_n \circ T_n$.

Our main result involves certain ``maps of spectra'' which are not quite compatible with the connecting maps, but only up to homotopy. To deal with that situation, we introduce the following strictifcation procedure.

\begin{defn}\label{defn:weakspectrummap}
Let $(A_n,\alpha_n)$ and $(B_n,\beta_n)$ be spectra of topological spaces. A \emph{weak spectrum map} is a sequence $T_n:A_n \to B_n$ of pointed maps, such that there are pointed homotopies $\beta_n \circ T_n \sim (\Omega T_{n+1}) \circ   \alpha_n$. A \emph{strictification} of a weak spectrum map $T$ is a spectrum map $\tilde{T}: A \to B$ such that there is a pointed homotopy $\tilde{T}_n \sim T_n :A_n \to B_n $ for each $n$.
\end{defn}

\begin{lem}\label{lem:strictification-of-spectrummaps}
Let $(A_n,\alpha_n)$ and $(B_n,\beta_n)$ be spectra of topological spaces and assume that the adjoint $\alpha^{\ad}_n: \Sigma A_n \to A_{n+1}$ is a cofibration, for each $n \geq 0$. Let $T:A \to B$ be a weak spectrum map. Then $T$ has a strictification $\tilde{T}$.
\end{lem}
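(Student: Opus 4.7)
The plan is to build $\tilde{T}_n$ inductively, using the cofibration hypothesis to invoke the homotopy extension property at each stage. It is convenient to pass to the adjoints. Strict compatibility of $\tilde{T}$ with the structure maps is equivalent to commutativity of the diagram
\[
\xymatrix{
\Sigma A_n \ar[r]^{\Sigma \tilde{T}_n} \ar[d]_{\alpha_n^{\ad}} & \Sigma B_n \ar[d]^{\beta_n^{\ad}} \\
A_{n+1} \ar[r]_{\tilde{T}_{n+1}} & B_{n+1} ,
}
\]
while the weak spectrum map hypothesis, passed through the loop-suspension adjunction, supplies pointed homotopies $h_n : T_{n+1} \circ \alpha_n^{\ad} \sim \beta_n^{\ad} \circ \Sigma T_n$ on $\Sigma A_n$.

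The inductive step goes as follows. Set $\tilde{T}_0 := T_0$. Suppose $\tilde{T}_n : A_n \to B_n$ has been constructed together with a pointed homotopy $H_n : \tilde{T}_n \sim T_n$. Then $\Sigma H_n$ gives a homotopy $\beta_n^{\ad} \circ \Sigma \tilde{T}_n \sim \beta_n^{\ad} \circ \Sigma T_n$, which concatenated with the reverse of $h_n$ yields a pointed homotopy
\[
K_n : \beta_n^{\ad} \circ \Sigma \tilde{T}_n \;\sim\; T_{n+1} \circ \alpha_n^{\ad}
\]
on $\Sigma A_n$. Because $\alpha_n^{\ad} : \Sigma A_n \to A_{n+1}$ is a (pointed) cofibration, the pair $(A_{n+1}, \Sigma A_n)$ has the pointed HEP, so the homotopy $K_n$, together with the map $T_{n+1}$ defined on $A_{n+1}$ at time $1$, extends to a pointed homotopy $\widetilde{K}_n : A_{n+1} \times I \to B_{n+1}$ with $\widetilde{K}_n(\,\cdot\,,1) = T_{n+1}$ and $\widetilde{K}_n \circ (\alpha_n^{\ad} \times \id) = K_n$. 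Define
\[
\tilde{T}_{n+1} := \widetilde{K}_n(\,\cdot\,,0) .
\]
Then $\tilde{T}_{n+1}$ is pointedly homotopic to $T_{n+1}$ via $\widetilde{K}_n$, and by construction $\tilde{T}_{n+1} \circ \alpha_n^{\ad} = K_n(\,\cdot\,,0) = \beta_n^{\ad} \circ \Sigma \tilde{T}_n$, which is the required strict compatibility.

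The only real issue is bookkeeping: one has to concatenate and reverse pointed homotopies and invoke HEP in the pointed category, which is legitimate under the standing cofibration assumption; no further input is required. The construction is unobstructed, so the induction produces a strict spectrum map $\tilde{T}$ together with a pointed homotopy $\tilde{T}_n \sim T_n$ in each degree, as claimed.
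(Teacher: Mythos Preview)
Your proof is correct and follows essentially the same route as the paper's: both argue by induction, pass to adjoints, concatenate the given homotopy with the inductive one to obtain a homotopy $\beta_n^{\ad}\circ\Sigma\tilde{T}_n \sim T_{n+1}\circ\alpha_n^{\ad}$ on $\Sigma A_n$, and then invoke the homotopy extension property along the cofibration $\alpha_n^{\ad}$ to produce $\tilde{T}_{n+1}$. Your write-up is merely more explicit about the HEP step and the bookkeeping of homotopies, which the paper compresses into a single sentence.
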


\begin{proof}
We construct $\tilde{T}_n$ inductively and set $\tilde{T}_0=T_0$. Assume that $\tilde{T}_k$ is already constructed for $k \leq n$. Then there is a pointed homotopy $(\Omega T_{n+1}) \circ \alpha_n \sim \beta_n \circ T_n \sim \beta_n \circ \tilde{T}_n$ of maps $A_n \to \Omega B_{n+1}$. Taking adjoints yields a pointed homotopy
\[
 T_{n+1} \circ \alpha^{\ad}_n = (\Omega T_{n+1} \circ \alpha_n)^{\ad} \sim (\beta_n \circ \tilde{T}_n)^{\ad} = \beta_n^{\ad} \circ \Sigma \tilde{T}_{n}.
\]
Since $\alpha_n^{\ad}$ is a cofibration, there is $\tilde{T}_{n+1} \sim T_{n+1}: A_{n+1} \to B_{n+1}$ such that $\tilde{T}_{n+1} \circ \alpha^{\ad}_n = \beta_n^{\ad} \circ \tilde{T}_n$.
\end{proof}

\begin{lem}\label{lem:weakhomotopy-infinitelopopmap}
Let $S,T:A \to B$ be two maps of spectra such that $S_m \sim T_m$ for each $m$. Then the maps $\Omega^{\infty-n} S$ and $\Omega^{\infty-n} T$ are weakly homotopic, i.e. they become homotopic when composed with any map $K \to \Omega^{\infty-n} A$ from a finite CW complex. In particular, if $S$ and $T$ are strictifications of the same weak spectrum map, then $\Omega^{\infty-n} S$ and $\Omega^{\infty-n} T$ are weakly homotopic. 
\end{lem}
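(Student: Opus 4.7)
The plan is to exploit the standard fact that a map from a finite CW complex into a mapping telescope factors, up to homotopy, through a finite stage, and then to use that the individual homotopies $S_m \sim T_m$ can be looped down to yield homotopies at each stage. The main observation is that although the family of homotopies $S_m \sim T_m$ need not assemble into a homotopy of spectrum maps, any single finite-CW test map only sees finitely many stages, so a single $S_r \sim T_r$ already suffices.

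\textbf{Step 1: factoring through a finite stage.} Let $K$ be a finite CW complex and $g\colon K \to \Omega^{\infty-n} A$ a map. Write $\iota_r\colon \Omega^{r-n} A_r \to \Omega^{\infty-n} A = \hocolim_r \Omega^{r-n} A_r$ for the canonical map into the mapping telescope. Since the mapping telescope is filtered by the subtelescopes $\Tot_r$ built from stages $\leq r$, each of which deformation retracts onto $\Omega^{r-n} A_r$ via $\iota_r$, and since $g(K)$ is compact and hence contained in some $\Tot_r$, we obtain a map $g_r\colon K \to \Omega^{r-n} A_r$ together with a homotopy $g \sim \iota_r \circ g_r$ in $\Omega^{\infty-n} A$.

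\textbf{Step 2: reducing to a single stage.} Because $S\colon A \to B$ is a strict spectrum map, $\Omega^{\infty-n} S$ is compatible with the canonical inclusions: $\Omega^{\infty-n} S \circ \iota_r = \iota_r \circ \Omega^{r-n} S_r$. Combining this with Step 1 gives
\[
\Omega^{\infty-n} S \circ g \;\sim\; \iota_r \circ \Omega^{r-n} S_r \circ g_r,
\]
and similarly for $T$. The hypothesis provides a (pointed) homotopy $S_r \sim T_r$; applying the functor $\Omega^{r-n}$ yields a homotopy $\Omega^{r-n} S_r \sim \Omega^{r-n} T_r$. Pre- and post-composition with $g_r$ and $\iota_r$ then gives
\[
\iota_r \circ \Omega^{r-n} S_r \circ g_r \;\sim\; \iota_r \circ \Omega^{r-n} T_r \circ g_r \;\sim\; \Omega^{\infty-n} T \circ g,
\]
so $\Omega^{\infty-n} S \circ g \sim \Omega^{\infty-n} T \circ g$, which is the desired conclusion.

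\textbf{Expected difficulty.} There is no serious obstacle; the only point that requires a line of care is the factorization in Step 1, which relies on the concrete mapping-telescope model of $\hocolim$ together with compactness of $K$. The reason the lemma only yields \emph{weak} homotopy (rather than an honest homotopy) is precisely that the choice of $r$ depends on $K$, so the loop-level homotopies cannot in general be amalgamated into a single homotopy on $\Omega^{\infty-n} A$.
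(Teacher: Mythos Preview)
Your proof is correct and follows exactly the approach indicated in the paper: factor a map from a finite CW complex through a finite stage $\Omega^{r-n} A_r$ of the mapping telescope and then use the single homotopy $S_r \sim T_r$. The paper's own proof is a one-line hint to this effect, and you have simply spelled out the details.
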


\begin{proof}
Use that any map from a finite CW complex $K$ to the mapping telescope $\Omega^{\infty-n}A$ factors a finite stage $\Omega^{m-n} A_m$.
\end{proof}

\begin{remark}
The homotopies in Definition \ref{defn:weakspectrummap} are not part of the data. This has the effect that the spectrum map $\tilde{T}$ is \emph{not} uniquely determined up to homotopy. The individual maps $\tilde{T}_n$ are uniquely determined up to homotopy, and the maps $\Omega^{\infty-n} \tilde{T}$ on infinite loop spaces are determined up to weak homotopy, by Lemma \ref{lem:weakhomotopy-infinitelopopmap}. For the rest of the paper, we use the following convention: if $T$ is a weak spectrum map, then the statement that $\tilde{T}$ has a certain property is to be interpreted that \emph{any} strictification $\tilde{T}$ has this property. 
\end{remark}

There are three types of spectra which we like to consider: Thom spectra, $K$-theory spectra, and a spectrum built out of spaces of manifolds. We review the definitions in the next subsections.

\subsection{Vector bundles and Thom spectra}

\begin{defn}
The sheaf $\Vect_d$ of $d$-dimensional vector bundles assigns to $X \in \Mfds$ the set $\Vect_d (X)$ of all smooth real vector bundles $V \subset X \times \bR^{\infty}$ of rank $d$. The subsheaf $\Vect_{d,n} \subset \Vect_d$ assigns to $X$ the set of all $V \in \Vect_d (X)$ with $V \subset X \times \bR^n$. A \emph{vector bundle} of rank $d$ on an arbitrary sheaf $\cF$ is a map of sheaves $\theta: \cF \to \Vect_d$. 
\end{defn}

For example, on the sheaf $\Vect_ {d,n}$ we have the tautological vector bundles $\id:\Vect_{d,n} \to \Vect_{d,n}$ and its orthogonal complement $\bot: \Vect_{d,n} \to \Vect_{n-d,n}$ which sends $V \in \Vect_{d,n}(X)$ to the orthogonal complement bundle $V^\bot \to X$. Of course, the sheaf $\Vect_{d,n}$ is nothing else than the sheaf of smooth maps into the Grassmann manifold $\Gr_{d,n}$. 

\begin{defn}\label{defn:thomsheaf}
Let $\cF$ be a sheaf and let $\theta:\cF \to \Vect_d$ be a vector bundle. The \emph{Thom sheaf} $\cTh (\theta)$ of $\theta$ assigns to $X \in \Mfds$ the set of all triples $(U,z,s)$ where $U \subset X$ is open, $z \in \cF(U)$ and $s$ is a smooth section of the vector bundle $\theta (z) \to U$ which satisfies the following \emph{growth condition}. If $x_n \in U$ is a sequence that converges to $x \in \overline{U}\setminus U$, then $\norm{s(x_n)} \to \infty$. This is a pointed sheaf with basepoint $(\emptyset,*,\emptyset) \in \cTh (\theta)(X)$. 
\end{defn}

To understand the rationale for this definition, consider the example $\cF=\Vect_{d,n}$ and $\theta=\id$ (the $d$-dimensional tautological bundle). The reader should check that in this case $\cTh (\theta)$ is the sheaf of continuous maps $X  \to \Th (V_{d,n})$ of maps to the Thom space of the tautological bundle $V_{d,n} \to \Gr_{d,n}$ which are smooth outside the preimage of the point at infinity.

Let $\theta: \cF \to \Vect_d$ be a $d$-dimensional vector bundle on a sheaf. Let $\cF_n := \theta^{-1}(\Vect_{d,n}) \subset \cF$, let $\theta_n: \cF_n \to \Vect_{d,n}$ be the restriction of $\theta$ and let $\theta^\bot_n : \cF_n \to \Vect_{n-d,n}$ be the orthogonal complement of $\theta_n$, i.e. vector bundle $\cF_n \stackrel{\theta_n}{\to}\Vect_{d,n} \stackrel{\bot}{\to} \Vect_{n-d,n}$. We define 
\[
 \MT \theta_n := \cTh (\theta_n^\bot).
\]
In plain words, $\MT \theta_n (X)$ is the set of all $(U,z,s)$ such that $U \subset X$ is open, $z \in \cF_n (U)$ and $s$ is a smooth section of the vector bundle $\theta (z)^\bot \subset U \times \bR^n$ which satisfies the growth condition. 
The structure map $\eta_n:\MT \theta_n \to \Omega \MT \theta_{n+1}$ sends an element $(U,z,s)$ to $(U \times \bR , \pr_U^* z, s')$, where $s'$ is the section of the bundle $\pr_U^*\theta_n^\bot ( z) \oplus \bR= \theta_{n+1}^{\bot}(\pr_U^* z) \to U \times \bR $ given by $s'(t,x):= (s(x),t)$. Here we use the identification from Remark \ref{rem:chage-coordinates-for-loopsheaf} and view $U \times \bR$ as an open subset of $X \times \widehat{\bR}$.

\begin{defn}
The spectrum $\MT \theta$ just constructed in the \emph{Madsen-Tillmann-Weiss spectrum} of the vector bundle $\theta: \cF\to \Vect_d $. One might write $\MT \theta(d)$ to emphasize the rank of $\theta$.
\end{defn}

\begin{example}\label{ex:dirac-symbol-tangential}
Let us discuss most important (for the purpose of this paper) example of a sheaf with a vector bundle, using the notations introduced in \cite[\S 1.1]{JEIndex1}. Let $\gA$ be a graded Real\footnote{Everything in this paper can easily be ``complexified'', by ignoring the Real structure at every place.} $C^*$-algebra. For a finitely generated projective graded Real Hilbert $\gA$-module $P$ with grading $\eta$, we let $U(P)$ be the group of unitary $\gA$-linear even Real automorphisms of $P$, equipped with the norm topology. This is a Banach Lie group, and hence the notion of a smooth Real graded $P$-bundle on a smooth manifold is well-defined. We define $\cC_\gA$ to be the sheaf which assigns to $X \in \Mfds$ the set of all tuples $(V,Q,\eta,c)$, where 
\begin{enumerate} 
\item $V \to X$ is a real rank $d$ smooth vector subbundle of $X \times \bR^\infty$, equipped with an inner product,
\item $Q \to X$ is a smooth bundle of finitely generated projective Real Hilbert-$\gA$-modules, 
\item $\eta$ is a grading on $Q$ and 
\item $c$ is a $\Cl(V)$-structure on $Q$, in other words, a bundle map $c: V 
\to \End_\gA (Q)$ such that 
\begin{equation}\label{clifford-indeitities}
 c(v)^2 = -\norm{v}^2; \; c(v)^* = - c(v); \; c(v) \eta = -\eta c(v); \; \overline{c(v)} = c(v).
\end{equation}
\end{enumerate}
The map $\theta_\gA:(V,Q,\eta,c) \mapsto V$ is a sheaf map $\theta_\gA: \cC_\gA \to \Vect_d$, and the above construction gives rise to a spectrum $\MT \theta_{\gA}(d)$. We can view $\cC_\gA (X)$ as the set of smooth maps into an infinite-dimensional manifold, as follows. Let $(P,\eta)$ be a graded finitely generated projective Hilbert-$\gA$-module and let $\cS_{d} (P)$ be the set of all Real graded $\Cl^{d,0}$-structures on $P$, in other words, the set of all linear maps $c: \bR^d \to \Lin_{\gA}(P)$ satisfying \eqref{clifford-indeitities} for each $v \in \bR^d$. This is a subset of the normed vector space $\Lin (\bR^d, \Lin_{\gA} (P))$, from which $\cS_d (P)$ inherits its topology. The group $O(d) \times U(P)$ acts on $\cS_d (P)$ via
\[
((g,h) \cdot c)v := h c(gv) h^*. 
\]
Next, we take the disjoint union $\coprod_P \cS_d (P)$, taking one module $P$ from each isomorphism class. 
The Borel construction $EU(P) \times_{U(P)} \coprod_P \cS_d (P)$ can be viewed as the space of all projective finitely generated Hilbert $\gA$-modules equipped with a $\Cl^{d,0}$-structure. It is an $O(d)$-space, and 
\[
\theta_\gA: EO(d) \times_{O(d)} (EU(P) \times_{U(P)} \coprod_P \cS_d (P)) \to BO(d)
\]
is a space model for the map $\theta_\gA$.  
\end{example}

\begin{example}
The construction of the spinor bundle of a spin vector bundle is encoded in a natural map $\MT \Spin (d) \to \MT\theta_{\Cl^{d,0}} (d)$ defined as follows. We let $\cB_{\Spin (d)}$ be the sheaf which assigns to $X \in \Mfds$ the set of all $(V,P,\lambda)$, where $V \in \Vect_d (X)$, $P \to X$ is a smooth $\Spin (d)$-principal bundle and $\lambda: P \times_{\Spin (d)} \bR^d \cong V$ is an isometric isomorphism. This has the homotopy type of $B \Spin (d)$. Let $\MT \Spin (d)$ be the Madsen-Tillmann-Weiss spectrum associated with the forgetful map $\theta:\cB_{\Spin (d)} \to \Vect_d$ defined by $(V,P,\lambda) \mapsto V$.

Recall that $\Spin(d)$ is a subgroup of the multiplicative subgroup of the even part $\Cl^{d,0}_{\ev}$ of $\Cl^{d,0}$. If $(V,P,\lambda) \in \cB_{\Spin (d)}$, then $\spinor_V := P \times_{\Spin (d)} \Cl^{d,0}$ is a bundle of projective finitely generated Hilbert-$\Cl^{d,0}$-modules, with a natural grading $\eta$ and there is a natural map $c: V \to \End (\spinor_V)$ given by Clifford multiplication and $\lambda$. So $(V,P,\lambda) \mapsto (V,\spinor_V,\eta,c)$ defines a map $\cB_{\Spin (d)} \to \cC_{\Cl^{d,0}}$. This induces the map $\MT \Spin (d) \to \MT\theta_{\Cl^{d,0}} (d)$.

More generally, let $G$ be a discrete group. Let $\cB_{\Spin (d) \times G}$ be the sheaf which assigns to $X$ the set of all $(V,P,\lambda,N)$, where $(V,P,\lambda) \in \cB_{\Spin (d)} (X)$ and $N \to X$ is a $G$-Galois cover. The homotopy type of $\cB_{\Spin (d) \times G}$ is $B \Spin (d) \times BG$. A map $\cB_{\Spin (d) \times G} \to \cC_{\Cl^{d,0} \otimes \gC^* (G)}$ is given as follows (here $\gC^*(G)$ can be either the reduced or the maximal group $\cstar$-algebra). It assigns to $(V,P,\lambda,N)$ the element $(V,\spinor_V \otimes \cL_N,\eta \otimes 1, c \otimes 1) \in \cC_{\Cl^{d,0} \otimes \gC^*(G)}$, where $\cL_N \to X$ is the Mishchenko-Fomenko line bundle of $N$. See \cite[\S 1.1]{JEIndex1} for more details. This yields a spectrum map $\MT \Spin (d) \wedge BG_+ \to \MT\theta_{\Cl^{d,0}\otimes \gC^* (G)} (d)$. 
\end{example}

\subsection{K-theory spectra}

In \cite{JEIndex1}, we have defined the model for $K$-theory we are going to use. Let us recall the definition. 

\begin{defn}\cite[Definition 3.4]{JEIndex1}\label{defn-kcycleindex2}
Let $\gA$ be a graded (possibly Real) $C^*$-algebra and $n \geq 0$. A \emph{$K^{n,0}(\gA)$-cycle} on the manifold $X$ is a tuple $(E,\eta,c,D)$, consisting of a continuous field of Hilbert-$\gA$-modules $E$ on $X$, a grading $\eta$ and a $\Cl^{n,0}$-structure $c$ on $E$, and a $\Cl^{n,0}$-antilinear, self-adjoint and odd unbounded Fredholm family $D$ on $E$ (see \cite[Definition 2.32]{JEIndex1}). The cycle $(E,\eta,c,D)$ is \emph{degenerate} if $D$ is invertible. 
\end{defn}

\begin{lem}
The functor $\cK( \gA)_n: \Mfds \to \Set$, which assigns to a test manifold $X$ the set\footnote{As explained in \cite[Remark 3.5]{JEIndex1}, we take a Grothendieck universe and consider all cycles which are contained in this universe.} of all $K^{n,0}(\gA)$-cycles on $X$, is a sheaf.
\end{lem}

\begin{proof}
Let $(U_i)_{i \in I}$ be an open covering of $X$ and let $z_i:=(E_i,\eta_i,c_i,D_i) $ be a compatible family of $K^{n,0}(\gA)$-cycles on the manifolds $U_i$. We construct a $K^{n,0}(\gA)$-cycle $z=(H,\eta,c,D)$ on $X$ as follows. Firstly, there is a unique continuous field of Banach spaces $H$ on $X$ such that $H|_{U_i}= H_i$, by \cite[Proposition 9]{DD}. The fibre $H_x$ of $H$ over $x \in X$ is equal to $(H_i)_x$, where $i \in I$ is so that $x \in U_i$ (it does not matter which $i$ is chosen, since $z_i|_{U_i \cap U_j}= z_j|_{U_i \cap U_j}$). The Hilbert-$\gA$-module structure on $H$, the grading $\eta$ and the $\Cl^{n,0}$-structure $c$ is defined in the unique sensible way. 

Let $(W_i,\Delta_i)$ be the domain of $D_i$ (using the terminology introduced in \cite[\S 2.2]{JEIndex1}). For each $x \in X$, we let $D_x := (D_i)_x$ for suitable $i$. This is an unbounded operator on $H_x$, with a domain $W_x := (W_i)_x$. Now we let $W:= (W_x)_{x \in I}$ and let $\Delta$ be the space of all sections $s $ of $H$ so that $s|_{U_i} \in \Delta_i$ for all $i \in I$. Then $D$ is a closed symmetric operator family with domain $(W,\Delta)$: symmetry is a pointwise condition, and closedness is a local condition. 

The operator family $D$ is Fredholm because $D_i$ is Fredholm, because $D|_{U_i}=D_i$ and by \cite[Lemma 2.18]{JEIndex1}. 
\end{proof}

The basepoint in $\cK(\gA)_n$ is the zero cycle. 
By $\bD(\gA)_n \subset \cK(\gA)_n$, we denote the subsheaf of degenerate cycles. The sheaf $\bD(\gA)_n$ is contractible by \cite[Lemma 3.9]{JEIndex1}. The definition of the group $K^n(X;\gA)$ given in \cite{JEIndex1} can be rewritten as $K^n(X; \gA):= \cK(\gA)_n [X]$. We remark that for compact $X$, this is essentially the unbounded model for the Kasparov group $KK(\Cl^{n,0}, \gC(X,\gA))$.

The Bott map, in the form discussed in \cite[\S 3.3]{JEIndex1}, is a map $\bott:\cK( \gA)_n \to \Omega \cK( \gA)_{n+1}$ of sheaves. Its definition involves the canonical Clifford module which also appears at other places in this paper. 
\begin{defn}\label{defn:cacnonicalclissoefmodules}
 Let $V$ be a euclidean vector space. For $v \in V$, we let $\ins_v: \Lambda^* V^* \to \Lambda^* V^*$ be the insertion operator on the exterior algebra. Let $e(v)$ and $\eps(v)$ be the endomorphisms of $\Lambda^* V^*$ defined by 
\[
e_V (v)=e(v):= \ins_v^* - \ins_v; \;\epsilon_V (v)=\epsilon(v):= \ins_v^* + \ins_v.
\]
Let $\iota=\iota_V$ be the even/odd grading on $\Lambda^* V^*$. Then $V \oplus V^- \to \Lin (\Lambda^* V^*)$, $(v,w) \mapsto e(v) + \eps (w)$ endows $\Lambda^* (V^*)$ with the structure of a graded $\Cl (V \oplus V^-)$-module, denoted $\bS_V$. For $V=\bR^n$, we just write $\bS_n := \bS_V$. In that case, we let $e_i,\eps_i$ be the Clifford action by the standard basis vectors of $\bR^n$.
The construction clearly generalizes to vector bundles. Note that there is a canonical isomorphism
\[
 \bS_V \otimes \bS_W \cong \bS_{V \oplus W}
\]
of Clifford modules (the tensor product of a $\Cl (V)$-module $(E,\iota,c)$ and a $\Cl(W)$-module $(F,\eta,d)$ is the $\Cl (V \oplus W)$-module $(E \otimes F, \iota \otimes \eta, c \otimes 1 + \iota \otimes d)$). The following construction also appears frequently: let $\pi:V \to X$ be a Riemannian vector bundle, $Y$ a space and $f:Y \to V$ a map. By $\eps (f)$, we denote the endomorphism of the vector bundle $(\pi \circ f)^* \bS_V\to Y$ which in the fibre $((\pi \circ f)^* \bS_V)_y = \bS_{V_{\pi(f(y))}}$ is given by $\eps(f(y))$.
\end{defn}

Now we can give the definition of the Bott map. Let $\gx:=(E, \eta,c,D) \in \cK( \gA)_n (X)$ and consider the $K^{n+1,0}(\gA)$-cycle $\gy$ on $\bR \times X $ given by
\[
\gy:=(\pr^*_X E \otimes \bS_1, \eta \otimes \iota, c \otimes 1 + \eta \otimes e, D \otimes 1 + \eta \otimes \eps (\pr_\bR)). 
\]
Explicitly, $\pr^*_X E \otimes \bS_1$ is the continuous field of Hilbert-$\gA$-modules whose fibre over $(t,x)$ is $E_x \otimes \bS_1$, with grading $\eta_x \otimes \iota$. The Clifford action by $v \in \bR^n$ is $c(v) \otimes 1$, and that by $te_{n+1}$ is $\eta \otimes e_1$. The operator over the point $(t,x)$ is $D_x \otimes 1 + \eta_x \otimes t \eps_1$.
The restriction of $\gy$ to $(\bR\setminus [-1,1]) \times X$ is degenerate in the sense of \cite[Definition 3.4]{JEIndex1} and hence $\gy$ can be extended by $0$ along the open embedding $j: \bR \times X \to \widehat{\bR} \times X$, as in \cite[\S 3.1]{JEIndex1}. We put
\[
 \bott (\gx):= j_! \gy 
\]
and obtain a map $\bott:\cK (\gA)_n \to \Omega \cK(\gA)_{n+1}$ of sheaves. It follows from the Bott periodicity theorem in the version \cite[Theorem 3.14]{JEIndex1} that $\bott$ is a weak equivalence of sheaves. 
Thus the collection $(\cK( \gA)_n)_{n \in \bN}$, together with the Bott maps $\cK(\gA)_n \to \Omega\cK(\gA)_{n+1} $ is an $\Omega$-spectrum. The Bott map restricts to a map $\bott: \bD(\gA)_n \to \Omega \bD(\gA)_{n+1}$. Note that $\pi_0 (\cK(\gA)_n) \cong K_{-n}(\gA)$ is the $n$th lower $K$-group of the graded $\cstar$-algebra $\gA$.

\subsection{The Thom homomorphism and the topological index}\label{subsec:thomhom}

In \cite[Definition 3.4]{JEIndex1}, we defined more generally the notion of $K^V (\gA)$-cycles on $X$, where $V \to X$ is a Riemannian vector bundle. Concordance classes of $K^V (\gA)$-cycles on $X$ form an abelian group $K^V (X;\gA)$, which is a twisted version of $K^{\dim (V)} (X;\gA)$. 
\begin{defn}\label{defn:twistedKgroup}
Let $X$ be a manifold and $(V \to X)\in \Vect_d (X)$. A \emph{$K^V(\gA)$-cycle} on $X$ is a tuple $(E,\eta,c,D)$, where $E$ and $\eta$ are as in \eqref{defn-kcycleindex2}, but $c$ is now a $\Cl(V)$-structure on $E$ and $D$ satisfies identities analogous to those spelled out in \eqref{defn-kcycleindex2}. We let $\bK^V (\gA)(X)$ be the set of $K^V (\gA)$-cycles on $X$.

Let $\cF $ be a sheaf and let $\theta:\cF \to \Vect_d$ be a vector bundle. A \emph{$\theta$-twisted $K(\gA)$-cycle} on $\cF$ is an assignment of a $K^{\theta(z)}(\gA)$-cycle $\gx(z)$ on $X$ for each $z \in \cF(X)$. We require naturality of $\gx(z)$, i.e. $f^* \gx(z)= \gx(f^* z)$ for each smooth map $f$.
\end{defn}

\begin{example}\label{ex:twisted-K-cycle-withoutoperator}
Let $\cF$ be the sheaf $\cC_{\gA}$ of Example \ref{ex:dirac-symbol-tangential}, with the forgetful map $\theta: \cC_\gA \to \Vect_d$. Let $z:=(V,Q,\eta,c) \in \cC_{\gA}(X)$ (recall that $\theta(z)=V$). We define a $K^V (\gA)$-cycle
\[
 \gx (z):= (Q,\eta,c,0).
\]
Note that $0$ is a Fredholm family because $Q$ is a bundle of \emph{finitely generated} projective modules. 
\end{example}

Next, we introduce the Thom isomorphism (we do not need to know that it is an isomorphism). To that end, let $\theta: \cF \to \Vect_{d,n}$ be a vector bundle with complement $\theta^\bot: \cF \to \Vect_{n-d,d}$ and let $\gx$ be a $\theta$-twisted $K(\gA)$-cycle on $\cF$. We wish to construct a sheaf map
\begin{equation}\label{thom-iso-as-sheaf-map}
\thom(\gx): \cTh (\theta^\bot) \to \cK(\gA)_n
\end{equation}
out of these data. Let $(U,z,s) \in \cTh (\theta^\bot)(X)$. Recall that $U \subset X$ is open, with inclusion map $j$, $z \in \cF(U)$, that $\theta(z)\subset U \times \bR^n$ is a vector bundle with complement $\theta(z)^\bot$. Finally, $s$ is a section of $\pi^\bot:\theta(z)^\bot \to U$ with the growth condition of Definition \ref{defn:thomsheaf}. 
The $\theta(z)$-twisted $K(\gA)$-cycle $\gx(z)$ can be written as $(E,\eta,c,D)$. We define
\begin{equation}\label{thom-iso-as-sheaf-map2}
 \thom(\gx)(U,z,s):= j_! (E\otimes \bS_{\theta^\bot(z)},\eta \otimes \iota_{\theta^\bot(z)}, c \otimes e_{\theta^\bot(z)}, D \otimes 1 + \eta \otimes \eps(s))
\end{equation}
using the extension-by-zero map $j_!$. The tensor product is the tensor product of a continuous field with a finite-dimensional vector bundle (and hence unproblematic). Since $D$ is odd,
\[
 (D \otimes 1 + \eta \otimes \eps(s))^2 = D^2 \otimes 1 + 1 \otimes \eps(s)^2 \geq \norm{s}^2
\]
and by the growth condition on $s$, extension by $0$ is indeed well-defined. Note that the Bott map can be viewed as a special case of the Thom homomorphism.

Now consider slightly more generally a sheaf with a vector bundle $\theta:\cF\to \Vect_d$ and a $\theta$-twisted $K(\gA)$-cycle $\gx$ on $\cF$. It restricts to a $\theta_n$-twisted $K (\gA)$-cycle $\gx_n$ on $\cF_n$. The above construction yields maps 
\[
 \thom(\gx_n):  \MT \theta_n=\cTh (\theta_n^\bot) \to \cK(\gA)_n
\]
of sheaves.

\begin{lem}\label{lem:thom-class-is-spectrum-map}
The sheaf maps $\thom(\gx)_n$ assemble to a weak spectrum map $\thom (\gx):\MT \theta \to \cK( \gA)$, in the sense that the diagram
\[
\xymatrix{
\MT \theta_n \ar[r]^-{\eta_n} \ar[d]^{\thom(\gx)_{n}} & \Omega \MT \theta_{n+1}  \ar[d]^{\Omega \thom(\gx)_{n+1}}\\
\cK(\gA)_n \ar[r]^-{\bott} & \Omega \cK(\gA)_{n+1}
}
\]
commutes up to a natural concordance. If the cycles $\gx$ and $\gy$ are naturally concordant, then $\thom (\gx_n)$ and $\thom (\gy_n)$ are homotopic. 
\end{lem}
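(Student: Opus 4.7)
The plan is to compute both compositions explicitly on an element $(U,z,s) \in \MT\theta_n(X)$ and exhibit a canonical isomorphism of $K^{n+1,0}(\gA)$-cycles between them, which is natural in $(U,z,s)$ and in $X$, and then upgrade this natural isomorphism to a natural concordance.

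Write $\gx(z) = (E,\eta_E,c,D)$ on $U$. Applying first $\thom(\gx)_n$ and then $\bott$, we obtain over $X \times \overline{\bR}$ the cycle supported on $U \times \bR$ with underlying field $E \otimes \bS_{\theta^\bot(z)} \otimes \bS_1$, grading $\eta_E \otimes \iota_{\theta^\bot(z)} \otimes \iota_1$, Clifford action sending $(v,w,u) \in \theta(z) \oplus \theta_n^\bot(z) \oplus \bR$ to
\[
c(v) \otimes 1 \otimes 1 + \eta_E \otimes e_{\theta^\bot(z)}(w) \otimes 1 + \eta_E \otimes \iota_{\theta^\bot(z)} \otimes e_1(u),
\]
and operator $D \otimes 1 \otimes 1 + \eta_E \otimes \eps(s) \otimes 1 + \eta_E \otimes \iota_{\theta^\bot(z)} \otimes t\eps_1$. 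Applying first $\eta_n$ and then $\thom(\gx)_{n+1}$ produces, again supported on $U \times \bR \subset X \times \overline{\bR}$, the cycle whose underlying field is $\pr_U^* E \otimes \bS_{\theta^\bot(z) \oplus \bR}$, with grading $\eta_E \otimes \iota_{\theta^\bot(z) \oplus \bR}$, Clifford action $c(v) \otimes 1 + \eta_E \otimes e_{\theta^\bot(z)\oplus \bR}(w,u)$, and operator $D \otimes 1 + \eta_E \otimes \eps(s')$ where $s'(x,t) = (s(x),t)$. Under the canonical isometry
\[
\Phi_{V,W}:\bS_V \otimes \bS_W \xrightarrow{\;\cong\;} \bS_{V \oplus W}
\]
recalled in Definition~\ref{defn:cacnonicalclissoefmodules}, applied with $V = \theta^\bot(z)$ and $W = \bR$, the identities $e_{V \oplus W}(w,u) = e_V(w) \otimes 1 + \iota_V \otimes e_W(u)$ and $\eps(s') = \eps(s) \otimes 1 + \iota_V \otimes t\eps_1$ match the two cycles term by term, including gradings, Clifford structures, and operators. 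The growth condition for the Thom extension in both cases follows from $\eps(s')^2 = |s|^2 + t^2$.

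Thus the two compositions agree up to the single canonical isomorphism $\mathbf{1}_E \otimes \Phi_{\theta^\bot(z),\bR}$, which is an even, Real, $\gA$-linear, Clifford-equivariant isometry of continuous fields intertwining the two operators. This isomorphism is natural in $X$ and in $(U,z,s)$, hence defines a sheaf isomorphism between the two compositions. To convert it into a natural concordance, I would produce, for each $(U,z,s) \in \MT\theta_n(X)$, an element of $\cK(\gA)_{n+1}(X \times \overline{\bR} \times \bR)$ whose restrictions to $X \times \overline{\bR} \times \{0,1\}$ are the two cycles: concretely, on $U \times \bR \times \bR$ use the constant family with fibre $E \otimes \bS_{\theta^\bot(z)} \otimes \bS_1$, equipped with the Clifford structure and operator coming from the above, and transport along $\Phi$ by the constant linear isomorphism; the isomorphism being \emph{canonical} (rather than merely existing) is what ensures that the resulting family is a well-defined continuous field and that the construction is functorial in test manifolds, hence a genuine sheaf homotopy.

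For the second assertion, given a natural concordance between the $\theta$-twisted $K(\gA)$-cycles $\gx$ and $\gy$, i.e.\ a $\theta \circ \ev$-twisted $K(\gA)$-cycle $\gz$ on $\cF^{\bR}$ with $\ev_i^* \gz = $ the corresponding cycle of $\gx, \gy$ for $i=0,1$, I apply the Thom construction pointwise: the assignment
\[
(U,z,s) \in \MT\theta_n(X) \longmapsto \thom(\gz)(U \times \bR,\,\pr_U^* z,\,\pr_U^* s) \in \cK(\gA)_n(X \times \bR)
\]
defines a sheaf map $\MT\theta_n \to \cK(\gA)_n^{\bR}$ whose evaluations at $0,1$ recover $\thom(\gx_n)$ and $\thom(\gy_n)$, giving the required natural concordance.

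The main bookkeeping obstacle is verifying that, after the reassociation $\theta(z) \oplus \theta_n^\bot(z) \oplus \bR = \theta(z) \oplus \theta_{n+1}^\bot(\pr_U^* z)$ and the canonical identification $\bS_V \otimes \bS_1 \cong \bS_{V \oplus \bR}$, the two Clifford actions (one involving an extra insertion of the grading $\iota_{\theta^\bot(z)}$ on the last summand, the other built in via the $e_{V \oplus W}$ formula) really coincide; this is exactly the content of the tensor product rule $c \otimes 1 + \iota \otimes d$ recorded in Definition~\ref{defn:cacnonicalclissoefmodules}.
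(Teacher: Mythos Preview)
Your proof is correct and follows essentially the same route as the paper's: unwind both compositions, observe they differ by the canonical isomorphism $\bS_V \otimes \bS_W \cong \bS_{V\oplus W}$, and convert this natural isomorphism into a natural concordance. The paper's proof is terser, merely saying ``straightforward unwinding of the definitions'' and citing \cite[Lemma \ref{lem:properties-concordances}]{JEIndex1} for the fact that a natural isomorphism of $K^{n,0}(\gA)$-cycles yields a natural concordance; your explicit bookkeeping is exactly what that unwinding amounts to. The one place where you could tighten things is the passage from isomorphism to concordance: your ``constant family transported along $\Phi$'' is a bit imprecise (a literally constant family restricts to the same cycle at both ends), and it is cleaner just to invoke the cited lemma rather than re-derive it.
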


\begin{proof}
This is by a straightforward unwinding of the definitions involved. One uses the natural isomorphism $\bS_V \otimes \bS_W \cong \bS_{V \oplus W}$ and that an isomorphism of $K^{n,0}(\gA)$-cycles yields a concordance, in a natural way, by \cite[Lemma 3.6]{JEIndex1}.
\end{proof}

\begin{defn}\label{defn:topological-indexnasn}
Let $\gA$ be a graded Real $\cstar$-algebra and let $\cC_\gA \to \Vect_d$ be the sheaf with vector bundle defined in Example \ref{ex:dirac-symbol-tangential}. Let $\gx$ be the $\theta$-twisted $K(\gA)$-cycle on $\cC_\gA$ constructed in Example \ref{ex:twisted-K-cycle-withoutoperator}. The weak spectrum map 
\[
 \topind:= \thom (\gx): \MT \theta_\gA(d) \to \cK (\gA)
\]
is the \emph{topological index}. 
\end{defn}

\subsection{Spaces of manifolds}

We now discuss the spectrum $\GRW \theta$ of spaces of manifolds, which was introduced by Galatius and Randal-Williams in \cite{GRW10} (under a different name). 

\begin{defn}
Let $\pi: M \to X$ be a submersion of manifolds with $d$-dimensional fibres. The \emph{vertical tangent bundle} $T_v \pi=T_v M \to M$ is the rank $d$ vector bundle $\ker (d \pi)$. 
A map $f: M \to \bR^n$ is \emph{fibrewise proper} if $(\pi,f):M \to X \times \bR^n$ is proper (note that the restriction of $f$ to $M_x := \pi^{-1}(x)$ is then a proper map to $\bR^n$).
\end{defn}

\begin{defn}
Let $\theta: \cF \to \Vect_d$ be a vector bundle on a sheaf $\cF$. Let $\pi:M \to X$ be a submersion with $d$-dimensional fibres. A \emph{$\theta$-structure} on $M$ is an element $\ell \in \cF(M)$ such that $\theta (\ell)=T_v M$.
\end{defn}

In order to have a well-behaved notion, we need to assume that the map $\theta$ of sheaves has the \emph{concordance lifting property}, which we shall assume henceforth. For the definition of this term, see \cite[Definition 4.5]{MW}; this is a version of the homotopy lifting property in the context of sheaves. Our main example, the map $\theta_\gA: \cC_\gA \to \Vect_d$ from Example \ref{ex:dirac-symbol-tangential}, has the concordance lifting property. 

\begin{defn}
Let $k \geq n$. For a test manifold $X$, let $\cD_{\theta,n}^k (X)$ be the set of all pairs $(M,\ell)$, where 
\begin{enumerate}
\item $M \subset X \times \bR^k$ is a submanifold which is closed as a subspace,
\item the projection $\pi=\pr_X:M \to X$ to the first factor is a submersion with $d$-dimensional fibres,
\item $\ell$ is a $\theta$-structure on $M$,
\item the projection map $f=\pr_{\bR^n}: M \to \bR^n$ onto the first $n$ coordinates is fibrewise proper.
\end{enumerate}
This defines a sheaf $\cD_{\theta,n}^k$.
\end{defn}

There are obvious inclusion maps $j: \cD_{\theta,n}^k \subset \cD_{\theta,n}^{k+1}$, and we define
\[
 \GRW\theta_n = \GRW\theta(d)_n:= \colim_k \cD_{n,\theta}^k.
\]
We remark that the colimit is to be understood in the category $\Sheaf$; the colimit of a sequence $\cF_0 \to \cF_1 \to \ldots$ is the sheafification of the presheaf $X \mapsto \colim_n (\cF_n(X))$. Let $(M, \ell)\in \GRW\theta_n (X)$. For each $x \in X$, the fibre $\pi^{-1}(x)$ is a $d$-dimensional submanifold of $\bR^\infty$, equipped with a $\theta$-structure, and the map $f:\pi^{-1}(x)\to\bR^n$ is proper. If $n \geq 1$, the diffeomorphism type of $\pi^{-1}(x)$ can change drastically with $x$, but if $n=0$, the set $\GRW\theta_0 (X)$ consists of all bundles of closed manifolds on $X$ (embedded into $\bR^\infty$), equipped with a $\theta$-structure, by Ehresmann's fibration lemma. We think of $\GRW\theta_n$ as the moduli space of $\theta$-manifolds which are ``noncompact in $n$ directions'' or ``controlled over $\bR^n$''. 

\begin{defn}\label{defn:scanning-map}
For $n<k$, the \emph{scanning map}
\begin{equation}\label{scanning-map}
\sigma: \cD_{\theta,n}^k \to \Omega \cD_{\theta,n+1}^{k}
\end{equation}
is defined as follows. Let $(M,\ell) \in \cD_{\theta,n}^k (X)$. Let $\sigma (M):= \{ (t,x,z) \in  \bR \times X  \times \bR^k \vert (x,z - t e_{n+1}) \in M\}$. This is a submanifold of $\bR \times X \times \bR^k$ and closed in $\widehat{\bR} \times X \times   \bR^k$. The projection onto $\widehat{\bR} \times X $ is a submersion with $d$-dimensional fibres (which are either diffeomorphic to $M$ or empty). The map $h:\bR \times M \to \sigma (M)$, $(t,x,z) \mapsto (t,x,z+te_{n+1})$, is a diffeomorphism over $\bR \times X$. This identifies the vertical tangent bundle of $\sigma (M)$ with the pullback of $T_v M$ along the projection $\bR \times M \to M$, and $\sigma (\ell)$ is the pulled back $\theta$-structure. 
\end{defn}
It is clear from the definitions that the diagram
\[
\xymatrix{
\cD_n^k \ar[r]^{j} \ar[d]^{\sigma} &\cD_n^{k+1} \ar[d]^{\sigma} \\
\Omega \cD_{n+1}^k \ar[r]^j & \Omega \cD_{n+1}^{k+1}
}
\]
commutes. Therefore, the scanning maps $\sigma$ induce a map 
\[
\scan: \GRW \theta_n \to \Omega \GRW \theta_{n+1}
\]
which turns $\GRW\theta$ into a spectrum. 
\begin{thm}[Galatius, Randal-Williams \cite{GRW10}]\label{grwtheorem2}
The spectrum $\GRW \theta$ is a weak $\Omega$-spectrum in the sense that for all $n \geq 1$, the maps $\GRW\theta_n \to \Omega \GRW\theta_{n+1}$ are weak equivalences. 
\end{thm}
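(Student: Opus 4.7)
The plan is to follow the strategy of Galatius and Randal-Williams in \cite{GRW10}. The scanning map is a formal delooping, and the challenge is to make this precise in the sheaf-theoretic framework of Subsection~\ref{subsec:homotopytheory}.

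First, I would fix $k$ (passing to the colimit only at the end) and reinterpret $\cD_{\theta,n+1}^k(X)$ as a sheaf of ``compactly supported sections'' over $\bR$ of the sheaf of $d$-manifolds in $\bR^k$ proper over $\bR^n$. Concretely, an element of $\Omega \cD_{\theta,n+1}^k(X)$ --- a family of submanifolds of $\bR^k$ parameterized by $\bR$, proper over $X \times \bR^{n+1}$ and vanishing near $\pm\infty$ --- can be viewed as a family over $\bR \times X \times \bR$ of germs of $d$-manifolds with $\theta$-structure. Under this identification, the scanning map should become the tautological inclusion of $\cD_{\theta,n}^k$ into its own ``sections-over-$\bR$'' sheaf.

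Next, I would prove that this tautological inclusion is a weak equivalence by the microfibration/group-completion argument of \cite{GRW10}. This relies on the fact that, for $n \geq 1$, the sheaf $\cD_{\theta,n}^k$ is group-like in the sense that the representing space is an H-space whose $\pi_0$ is a group; the H-space structure comes from juxtaposing two manifolds along the $\bR^n$-direction, and group-likeness from the possibility of reflecting and sliding a manifold past itself. The restriction $n \geq 1$ is essential: for $n=0$, $\GRW\theta_0$ classifies bundles of closed $\theta$-manifolds whose $\pi_0$ is the $\theta$-cobordism monoid, not a group, and indeed the $n=0$ scanning map becomes a weak equivalence only after group completion, as in \cite{MW}.

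The main obstacle will be verifying the microfibration property in the sheaf framework: one must show that $\cD_{\theta,n}^k$ can be reconstructed from its germs at points of $\bR$, so that a section of the germ-sheaf lifts to a genuine element of $\cD_{\theta,n+1}^k$. This local-to-global lifting is the technical core of \cite{GRW10}. A secondary point is the passage to the colimit over $k$, which is straightforward since the scanning map is compatible with the stabilization inclusions $\cD_{\theta,n}^k \hookrightarrow \cD_{\theta,n}^{k+1}$.
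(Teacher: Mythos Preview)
Your plan amounts to reproving \cite[Theorem 3.13]{GRW10} from scratch inside the sheaf-theoretic framework. That could in principle be carried out, but it is far more laborious than what the paper actually does, and your outline is too vague at the key points (the ``microfibration property in the sheaf framework'' and the ``local-to-global lifting'') to count as a proof rather than a programme.

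The paper's argument is much shorter and proceeds by comparison rather than by reproof. One observes that an element of $\cD_{\theta,n}^k(X)$ determines a continuous map from $X$ into the topological space $\Psi_\theta(n,k)$ constructed in \cite[\S 2]{GRW10}, giving a sheaf map $\cD_{\theta,n}^k \to \sh(\Psi_\theta(n,k))$; using \cite[Lemma 2.17]{GRW10} one checks that this is a weak equivalence. Under this comparison the scanning map $\sigma$ corresponds to the map (3-10) of \cite{GRW10}, which is shown there (Theorem 3.13) to be a weak equivalence for $n\geq 1$. One then passes to the colimit over $k$. In other words: build a bridge to the existing topological model and cite the result, rather than transporting the entire argument of \cite{GRW10} into sheaf language. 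Your approach would recover the same conclusion, but at the cost of redoing work that is already available; the paper's route avoids all of the delicate steps you flag as obstacles.
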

In \S \ref{subsec:grwproofs} below, we show how to derive Theorem \ref{grwtheorem2} from the results actually stated in \cite{GRW10}. 

\begin{defn}\label{linear-mfds-to-all-mfds}
A map
\[
\lambda_n:\MT \theta_n \to \cD_{\theta,n}^n
\]
of sheaves is defined by the following procedure. Let $(U,z,s) \in \MT \theta_n (X)$, i.e. $U \subset X$ is open, $z \in \cF(U)$, $\theta(z) \subset U \times \bR^n$ is a rank $d$ vector bundle with bundle projection $\pi$ and $s$ is a smooth section of the complement $\theta(z)^\bot$, subject to the growth condition. Define 
\[
 f: \theta(z) \to \bR^n; \; f (x,v):= v+s(x).
\]
The map $(\pi,f)$ is a proper embedding $\theta(z) \to X \times \bR^n$: it is clearly injective, and easily seen to be an immersion. To verify that it is proper, let $(x_n,v_n)\in \theta(z)$ be a sequence such that $(x_n, w_n+s(x_n))$ converges to $(x,z)\in X \times \bR^n$. Since $v_n \bot s(x_n)$, we have $\norm{v_n+s(x_n)}^2 = \norm{v_n}^2 + \norm{s(x_n)}^2$. Hence $\norm{s(x_n)}$ is bounded, and this implies that $x \in U$ and $s(x_n)\to s(x)$, by the growth condition. Then $v_n \to z - s(x)$, and $(x,z-s(x))\in \theta(z)$.
So $M:= (\pi,f)(\theta(z))$ is an element of $\cD_n^n(X)$. The vertical tangent bundle $T_v M:= \ker d \pi $ is canonically identified with $\pi^* \theta(z)$, and in particular, it is equipped with a canonical $\theta$-structure. 
\end{defn}
It follows quickly from the definitions that the diagram
\begin{equation}\label{structuremap-vs-scanning}
\xymatrix{
\MT \theta_n \ar[d]^{\eta_n} \ar[r]^{\lambda_n} & \cD_n^n \ar[r]^{j} & \cD_n^{n+1} \ar[d]^{\sigma}\\
\Omega \MT \theta_{n+1} \ar[rr]^{\Omega \lambda_{n+1}} & & \Omega \cD_{n+1}^{n+1}
}
\end{equation}
commutes. Hence the maps 
\[
\Lambda_n : \MT \theta_n \stackrel{\lambda_n}{\to} \cD_{n}^n \to \GRW\theta_n
\]
define a spectrum map 
\[
\Lambda: \MT \theta \to \GRW\theta.
\]
\begin{thm}[Galatius, Randal-Williams \cite{GRW10}]\label{grwtheorem1}
The map $\Lambda_n$ is $ (2n-2d-1) $-connected for each $n \geq 1$. In particular, $\Lambda$ is a stable weak equivalence of spectra. 
\end{thm}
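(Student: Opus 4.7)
The plan is to reduce the claim to the main comparison results of Galatius--Randal-Williams, in parallel with the derivation of Theorem \ref{grwtheorem2} promised in \S \ref{subsec:grwproofs}. I would first unravel the definition of $\Lambda_n$: by construction it factors as $\MT \theta_n \xrightarrow{\lambda_n} \cD^n_{\theta,n} \hookrightarrow \GRW\theta_n = \colim_k \cD^k_{\theta,n}$, and the image of $\lambda_n$ consists precisely of those $M \subset X \times \bR^n$ whose fibres over $X$ are (possibly empty) affine $d$-planes, namely the graphs of the sections $s$ of $\theta(z)^\bot$. So up to homotopy, $\Lambda_n$ is the inclusion of the subsheaf of \emph{linear} $d$-submanifolds into the sheaf of all $d$-submanifolds of $\bR^\infty$ that are fibrewise proper over $\bR^n$.

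Second, I would estimate the connectivity of the two constituent maps by general position. For $\lambda_n : \MT \theta_n \to \cD^n_{\theta,n}$: a $p$-parameter family in the target is a family of proper $d$-submanifolds of $\bR^n$, and lies in the image of $\lambda_n$ iff every fibre is affine. Straightening a $d$-submanifold of $\bR^n$ to an affine one requires killing curvature in codimension $n-d$, and forcing this obstruction in a generic $p$-parameter family costs another $n-d$ parameters; hence $\lambda_n$ should be roughly $(2n-2d-1)$-connected. The stabilisation inclusion $\cD^n_{\theta,n} \hookrightarrow \cD^k_{\theta,n}$ is even more highly connected by a similar dimension count (any family in $\bR^k$ can be generically compressed into the first $n$ coordinates), so taking $\colim_k$ preserves the bound.

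Third, I would match this with the precise theorems of \cite{GRW10}. Their work is phrased on the point-set topological spaces $\psi_\theta(n,k)$ of $d$-submanifolds of $\bR^k$ proper over $\bR^n$; I would show, as in \S\ref{subsec:grwproofs}, that the representing space $|\cD^k_{\theta,n}|$ is naturally weakly equivalent to $\psi_\theta(n,k)$, and that under this equivalence the sheaf map $\lambda_n$ corresponds to the canonical map $\Th(\theta_n^\bot) \to \psi_\theta(n,n)$ given by viewing an affine $d$-plane in $\bR^n$ as a point in the space of submanifolds (with the point at infinity going to the empty manifold). The GRW connectivity result then yields the stated $(2n-2d-1)$-connectivity of $\Lambda_n$, and passing to $n \to \infty$ gives the stable equivalence of spectra.

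The main obstacle will be the matching step in the third paragraph. The scanning, microflexibility, and compression arguments in \cite{GRW10} are phrased in their point-set model, and we must check that they translate cleanly to the sheaf-theoretic framework of \S\ref{subsec:homotopytheory}, including the identification of basepoints (empty manifold $\leftrightarrow$ Thom point) and the conversion of the growth condition in Definition \ref{defn:thomsheaf} into properness of the associated submanifold. Once this dictionary is set up, the connectivity bound reduces to the dimension count already present in \cite{GRW10}, and no new geometric content is needed.
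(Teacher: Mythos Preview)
Your factorisation $\Lambda_n = (\cD_{\theta,n}^n \hookrightarrow \GRW\theta_n) \circ \lambda_n$ is correct and is how the paper begins. But the connectivity accounting in your second paragraph is backwards, and this is a genuine gap. The map $\lambda_n: \MT\theta_n \to \cD_{\theta,n}^n$ is not merely $(2n-2d-1)$-connected; it is a \emph{weak equivalence} (this is exactly \cite[Theorem~3.22]{GRW10}, the equivalence between the Thom space of $\theta_n^\bot$ and the space of $\theta$-submanifolds of $\bR^n$). The finite connectivity bound comes entirely from the stabilisation inclusions $j:\cD_{\theta,n}^k \to \cD_{\theta,n}^{k+1}$, which are \emph{not} ``even more highly connected'' as you assert. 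Your compression heuristic for these maps is simply wrong: a $d$-submanifold of $\bR^{k+1}$ cannot in general be pushed into $\bR^k$ by general position once the parameter dimension is large.

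What the paper actually does to estimate $\conn(j)$ is the following chain of reductions. First, by the scanning equivalences $\sigma:\cD_{\theta,n}^k \simeq \Omega^{k-n}\cD_{\theta,k}^k$ (these are \cite[Theorem~3.13]{GRW10}), the map $j:\cD_{\theta,n}^k \to \cD_{\theta,n}^{k+1}$ has the same connectivity as $\Omega^{k-n}$ applied to the composite $\cD_{\theta,k}^k \xrightarrow{j} \cD_{\theta,k}^{k+1} \xrightarrow{\sigma} \Omega\cD_{\theta,k+1}^{k+1}$. By the commuting square \eqref{structuremap-vs-scanning} and the weak equivalence $\lambda_k$, this last composite is identified with the structure map $\eta_k:\MT\theta_k \to \Omega\MT\theta_{k+1}$ of the Thom spectrum. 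Its connectivity is then computed by elementary stable homotopy theory (Freudenthal on the suspension and the Thom isomorphism for the map $|\cF_k| \to |\cF_{k+1}|$, which is $(k-d)$-connected since $\Gr_{d,k} \to \Gr_{d,k+1}$ is): one gets $\conn(\eta_k)=2k-2d-1$, hence $\conn(j:\cD_{\theta,n}^k \to \cD_{\theta,n}^{k+1}) = (2k-2d-1)-(k-n)=k+n-2d-1$, and the minimum over $k\geq n$ is $2n-2d-1$. None of this appears in your outline; the Freudenthal/Thom input is the actual source of the number $2n-2d-1$, not any transversality count on $\lambda_n$.
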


Again, this is not stated as such in \cite{GRW10}. The derivation of Theorem \ref{grwtheorem1} from \cite{GRW10} uses ideas that are unimportant for the rest of this paper, and is therefore deferred to \S \ref{subsec:grwproofs}.

\begin{remark}\label{rem:alternativeviewpoint-scanning}
It is useful to change the perspective on elements of $\GRW \theta_n (X)$ slightly. Instead of remembering that $M \subset X \times \bR^\infty$ and that the projection map to $x$ is a submersion and that to $\bR^n$ is fibrewise proper, one can explicitly record them as $\pi$ and $f$ in the data. Hence we may think about elements of $\GRW\theta_n (X)$ as tuples $(M,\pi,f,\ell)$, $\pi:M \to X$ a submersion, $\ell$ a $\theta$-structure, and $f:M \to \bR^n$ a fibrewise proper map.

In this picture, the scanning map has an easier description: it maps $(M,\pi,f,\ell)$ to $(\bR \times M ,\pi',f',\ell')$, where $\pi'= \id \times \pi: \bR \times M \to \widehat{\bR} \times X$, $\ell'$ is the $\theta$-structure induced by $\theta$ via the canonical isomorphism $T_v \pi' \cong \pr_M^* T_v \pi$. Finally, $f'(t,x):= (f(x),t)$. 

This viewpoint simplifies the description of $\Lambda_n$ as well. It maps $(U,z,s) \in \MT \theta_n$ to $(\theta(z),\pi,f,\ell)$, where $\pi: \theta(z) \to U$ is the bundle projection, $f: \theta(z)\to \bR^n$ is the map from Definition \ref{linear-mfds-to-all-mfds} and $\ell$ is the canonical $\theta$-structure.
\end{remark}

\begin{remark}\label{rem:pontrjaginthom}
 The reader of \cite{GMTW} might have expected maps $\GRW\theta_n \to \Omega^{\infty-n} \MT \theta$ coming from a parametrized Pontrjagin-Thom construction to play an important role. These can be abstractly constructed, as follows (at least after taking representing spaces of the sheaves involved). The spectra $\GRW \theta$ and $\MT \theta$ of sheaves induce spectra $|\GRW \theta|$ and $|\MT \theta|$ of spaces, as explained in \S \ref{subsec:homotopytheory}. The map $ \Omega^{\infty-n}|\Lambda|: \Omega^{\infty-n} |\MT \theta| \to \Omega^{\infty-n}|\GRW\theta|$ is a weak homotopy equivalence by Theorem \ref{grwtheorem1}. We let $p_n: \Omega^{\infty-n}|\GRW\theta| \to \Omega^{\infty-n}|\MT\theta|$ be a homotopy inverse and write $\PT_n := p_n \circ \tau_n: |\GRW  \theta_n| \to \Omega^{\infty-n} |\MT \theta|$. For $n \geq 1$, this is a weak equivalence, by Theorem \ref{grwtheorem2}. One may construct the map $\PT_n$ geometrically by means of a Pontrjagin-Thom construction, similar to \cite[\S 3.1]{GMTW}, but 
that is not important for us. 
\end{remark}

\subsection{Proof of Theorems \ref{grwtheorem2} and \ref{grwtheorem1}}\label{subsec:grwproofs}

\begin{proof}[Proof of Theorem \ref{grwtheorem2} from \cite{GRW10}]
In \cite[\S 2]{GRW10}, a topology on the set $\cD_{\theta,n}^k(*)$ is defined, and the resulting space is denoted $\Psi_{\theta} (n,k)$ in loc.cit. An element $(M, \ell)\in \cD_{\theta,n}^k(X)$ defines a continuous map $X \to \Psi_{\theta} (n,k)$, $x \mapsto (\pi^{-1}(x),\ell|_{\pi^{-1}(x)})$ (it is even a smooth map in the sense of Definition 2.15 loc.cit.). Therefore, we obtain a map $\cD_{\theta,n}^k \to \sh (\Psi_{\theta} (n,k))$. Using \cite[Lemma 2.17]{GRW10}, one can show that this is a weak equivalence. 

There is an unnamed map ((3-10) in \cite{GRW10}) $\Psi_{\theta}(n,k) \to \Omega \Psi_{\theta}(k+1,k)$, which corresponds to the map $\sigma$; and Theorem 3.13 of \cite{GRW10} says that this map is a weak equivalence if $n \geq 1$. Hence so is $\sigma$. Passage to the colimit $k \to \infty$ finishes the proof of Theorem \ref{grwtheorem2}.
\end{proof}

To derive Theorem \ref{grwtheorem1} from \cite{GRW10}, we need an input from classical homotopy theory.

\begin{lem}\label{connectivity-thom-spaces}
Let $f: X \to Y$ be an $r$-connected map between spaces, let $W \to Y$, $V \to X$ be vector bundles, of rank $s+1$ and $s$, respectively, and let $V \oplus \bR \cong f^* W$ be an isomorphism. We get maps of Thom spaces
\[
\Th (V) \to \Omega \Th (V\oplus \bR) \to \Omega \Th (W).
\]
The composition of those maps is $\min \{2s-1, r+s \}$-connected. 
\end{lem}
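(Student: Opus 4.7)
The plan is to factor the composition as
\[
\Th(V) \xrightarrow{\;\alpha\;} \Omega \Th(V \oplus \bR) \xrightarrow{\;\Omega\beta\;} \Omega \Th(W),
\]
and analyze each factor separately. The first map $\alpha$ is, after identifying $\Th(V\oplus\bR) = \Sigma\,\Th(V)$, simply the unit $\Th(V) \to \Omega\Sigma\,\Th(V)$ of the $(\Sigma,\Omega)$-adjunction. Since $\Th(V)$ admits a CW decomposition with cells only in dimensions $\geq s$ apart from the basepoint, it is $(s-1)$-connected; the Freudenthal suspension theorem then yields that $\alpha$ is $(2s-1)$-connected.

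For the second map I would analyze the un-looped bundle map $\beta:\Th(f^{*}W) \to \Th(W)$ induced by $f$ and the isomorphism $V\oplus\bR \cong f^{*}W$. Factoring $f$ up to homotopy as a cofibration, the pair $(Y,X)$ has relative cells only in dimensions $\geq r+1$ because $f$ is $r$-connected. Pulling $W$ back and applying the Thom construction cell by cell, the mapping cone of $\beta$ has cells only in dimensions $\geq r + 1 + (s+1) = r + s + 2$, so $\beta$ is $(r+s+1)$-connected. Looping drops connectivity by one, so $\Omega\beta$ is $(r+s)$-connected.

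Since the composition of an $a$-connected and a $b$-connected map is $\min(a,b)$-connected, this gives the stated bound $\min\{2s-1,\,r+s\}$. The only minor care needed is that the Freudenthal bound requires $\Th(V)$ to be genuinely $(s-1)$-connected, i.e.\ that $X$ be path-connected (or empty); this is immediate from $f$ being $r$-connected once $r \geq 0$, and the cases $r<0$ or $X=\emptyset$ are trivial. The main technical point is the cell-count for the cofibre of $\beta$, which is conveniently handled by a CW model of $f$; alternatively, one may use the Thom isomorphism together with the relative Hurewicz theorem applied to the pair $(\Th(W),\Th(f^{*}W))$ to obtain the same connectivity estimate for $\beta$.
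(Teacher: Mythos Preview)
Your proof is correct and follows essentially the same two-step outline as the paper: Freudenthal for the first map, and a Thom-space connectivity estimate for the second. The only difference is cosmetic: the paper phrases the second step as ``Thom isomorphism with twisted coefficients plus the Hurewicz theorem'' (which you mention as an alternative), whereas your primary argument counts cells in a CW model of $f$; both yield that $\beta$ is $(r+s+1)$-connected and hence $\Omega\beta$ is $(r+s)$-connected. One tiny remark: your caveat about needing $X$ path-connected is unnecessary, since a wedge of $(s-1)$-connected spaces is $(s-1)$-connected, so $\Th(V)$ is $(s-1)$-connected regardless of whether $X$ is connected.
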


\begin{proof}
Since $\Th (V)$ is $(s-1)$-connected, the Freudenthal suspension theorem implies that the first of those maps is $(2s-1)$-connected. By the Thom isomorphism with twisted coefficients and the Hurewicz theorem, the second map is $(r+s)$-connected. \end{proof}

\begin{lem}\label{cor:connectivity-spectrum-map}
The map $\MT \theta_n \to \Omega \MT\theta_{n+1}$ is $(2n-2d-1)$-connected.
\end{lem}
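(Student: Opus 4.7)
The plan is to reduce the statement to Lemma \ref{connectivity-thom-spaces} by recognizing the structure map $\eta_n$, after passing to representing spaces, as the standard stabilization map of a Thom spectrum. First I would unwind the definitions to identify $|\MT\theta_n| = |\cTh(\theta_n^\bot)|$ with the ordinary Thom space $\Th(\theta_n^\bot)$ of the rank-$(n-d)$ bundle $\theta_n^\bot$ over the representing space $|\cF_n|$. This is essentially the content of the Thom sheaf definition from \ref{defn:thomsheaf}: the growth condition on the section $s$ in a triple $(U,z,s)$ is exactly what forces a continuous extension to the one-point compactification sending everything outside $U$ to the basepoint at infinity. The concordance lifting hypothesis on $\theta$ ensures that $|\cF_n|$ is the homotopy pullback $|\cF| \times^{h}_{BO(d)} \Gr_{d,n}$ and that $\theta_n^\bot$ is a genuine vector bundle over it.

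Next I would verify that under these identifications the structure map $(U,z,s) \mapsto (U \times \bR,\, \pr_U^* z,\, s')$ with $s'(t,x) = (s(x),t)$ is precisely the composition
\[
\Th(V) \longrightarrow \Omega \Th(V \oplus \bR) \longrightarrow \Omega \Th(W)
\]
of Lemma \ref{connectivity-thom-spaces}, with $V = \theta_n^\bot$ of rank $s = n-d$, $W = \theta_{n+1}^\bot$ of rank $s+1$, and the bundle isomorphism $V \oplus \bR \cong f^* W$ coming from the orthogonal decomposition $\bR^{n+1} = \bR^n \oplus \bR e_{n+1}$, where $f: |\cF_n| \to |\cF_{n+1}|$ is the inclusion. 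The first arrow is the Freudenthal suspension appending the coordinate $t$, and the second comes from stabilizing the complement bundle along $f$.

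It then remains to determine the connectivity of $f$. Using concordance lifting once more, the homotopy fiber of $|\cF_n| \to |\cF_{n+1}|$ coincides with that of $\Gr_{d,n} \to \Gr_{d,n+1}$, and the principal bundle $V_d(\bR^n) \to \Gr_{d,n} \to BO(d)$ together with the $(n-d-1)$-connectivity of the Stiefel manifold $V_d(\bR^n)$ shows that the inclusion $\Gr_{d,n} \hookrightarrow \Gr_{d,n+1}$ is $(n-d)$-connected. Plugging $r = s = n-d$ into Lemma \ref{connectivity-thom-spaces} yields the bound $\min\{2(n-d)-1,\, 2(n-d)\} = 2n-2d-1$. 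The only non-formal obstacle is the second step, the bookkeeping needed to match the sheaf-theoretic structure map with its topological Thom-space counterpart; once that identification is in place, the connectivity estimate is an immediate consequence of the lemma and standard Grassmannian facts.
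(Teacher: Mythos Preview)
Your proposal is correct and follows essentially the same route as the paper's proof: identify $|\MT\theta_n|\to\Omega|\MT\theta_{n+1}|$ with a Thom-space stabilization map over $|\cF_n|\to|\cF_{n+1}|$, use the concordance lifting property (the paper invokes \cite[Proposition A.6]{MW}) to reduce the connectivity of the latter to that of $\Gr_{d,n}\hookrightarrow\Gr_{d,n+1}$, and then apply Lemma~\ref{connectivity-thom-spaces} with $r=s=n-d$. The only cosmetic difference is that the paper simply asserts the $(n-d)$-connectedness of the Grassmannian inclusion, whereas you justify it via the Stiefel fibre sequence.
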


\begin{proof}
Let $\theta: \cF \to \Vect_d$ be the underlying map of sheaves with the concordance lifting property. The diagram
\[
 \xymatrix{
 \cF_n \ar[r] \ar[d]^{\theta} & \cF_{n+1} \ar[d]^{\theta}\\
 \Vect_{d,n} \ar[r] & \Vect_{d,n+1}
 }
\]
induces a homotopy cartesian diagram after taking representing spaces, since $\theta$ has the concordance lifting property and by \cite[Proposition A.6]{MW}. The bottom map is homotopy equivalent to the inclusion map $\Gr_{d,n} \to \Gr_{d,n+1}$ of Grassmann manifolds, which is $(n-d)$-connected. Therefore $|\cF_n|\to |\cF_{n+1}|$ is $(n-d)$-connected as well. The map $|\MT \theta_n| \to |\Omega \MT \theta_{n+1}|$ is homotopy equivalent to a map of Thom spaces over $|\cF_{n} |\to |\cF_{n+1}|$. Hence by Lemma \ref{connectivity-thom-spaces}, it is $\min \{2 (n-d)-1,(n-d+1) + (n-d)\}  = ( 2n-2d-1)$-connected.
\end{proof}

\begin{proof}[Proof of Theorem \ref{grwtheorem1}]
For a map $f: X \to Y$, we write $\conn (f)$ for the largest $r$ such that $f$ is $r$-connected. Assume that $n \geq 1$. The map $\Lambda_n$ was defined as the composition
\[
\Lambda_n : \MT \theta_n \stackrel{\lambda_n}{\to} \cD_{\theta,n}^{n} \to  \cD_{\theta,n}^{n+1} \to  \cD_{\theta,n}^{n+2} \to \ldots \to\GRW\theta_n.
\]
The map $\lambda_n$ is a weak equivalence by \cite[Theorem 3.22]{GRW10} (or rather a sheaf version of that result). Therefore
\[
\conn(\Lambda_n) \geq \min \{\conn (j:\cD_{\theta,n}^k \to \cD_{\theta,n}^{k+1}) \vert k \geq n\}.
\]
For $k \geq n$, the diagram
\[
\xymatrix{
\cD_{\theta,n}^k \ar[d]^{\sigma} \ar[r]^{j} & \cD_{\theta,n}^{k+1} \ar[d]^{\sigma}  \\
\Omega^{k-n} \cD_{\theta,k}^k  \ar[r]^{ \Omega^{k-n}j} & \Omega^{k-n} \cD_{\theta,k}^{k+1} \ar[d]^{\Omega^{k-n}\sigma} \\
& \Omega^{k+1-n} \cD_{\theta,k+1}^{k+1}
}
\]
commutes, with the iterated scanning maps as vertical maps. By \cite[Theorem 3.13]{GRW10}, all vertical maps are weak equivalences. But the composition $\cD_{\theta,k}^k \stackrel{j}{\to} \cD_k^{\theta,k+1} \stackrel{\sigma}{\to} \Omega \cD_{\theta,k+1}^{k+1}$ is homotopy equivalent to the structure map $\MT\theta_k \to \Omega \MT\theta_{k+1}$, by \eqref{structuremap-vs-scanning} and Theorem \cite[Theorem 3.13]{GRW10}. By Lemma \ref{cor:connectivity-spectrum-map}, it follows that $\cD_{\theta,n}^k \to \cD_{\theta,n}^{k+1}$ is $(2k-2d-1)- (k-n) =(k-2d-1+n)$-connected. Therefore
\[
\conn (\Lambda_n) \geq \min \{ k-2d-1+n \vert k \geq n\} =  2(n-d)-1. \qedhere
\]
\end{proof}

\section{The spectrum of manifolds equipped with Dirac operators and the index map}\label{sec:construction-index-amp}

\subsection{Spaces of manifolds equipped with Dirac operators}

Throughout this section, we fix a dimension $d$ (the dimension of the manifolds we are interested in) and a graded and possibly Real $C^*$-algebra $\gA$. 
The map $\theta_{\gA} : \cC_{\gA} \to \Vect_d$ defined in Example \ref{ex:dirac-symbol-tangential} has the concordance lifting property and yields spectra $\MT \theta_{\gA}(d)$ and $\GRW \theta_{\gA}(d)$. To ease notation, we shall write $\MT \gA$ and $\GRW \gA$ for those spectra. An element of $\GRW\gA_n(X)$ is a tuple $(M,\pi,f,E,\eta,c)$, where $M$ is a manifold equipped with a submersion $\pi:M \to X$ with $d$-dimensional fibres, $f: M \to \bR^n$ is a fibrewise proper map, $(E,\eta)$ is a bundle of graded, finitely generated projective Hilbert-$\gA$-modules on $M$ with a $\Cl (T_v M)$-structure $c$ (note that a $\theta_\gA$-structure contains a smooth metric on the fibres of $\pi$). Also, $M$ is a subset of $X \times \bR^\infty$, and $\pi$ and $f$ are the respective projection maps.

Recall that a \emph{Dirac operator} on such a bundle $E$ equipped with $\eta$ and $c$ is a fibrewise, $\gA$-linear, formally self-adjoint odd differential operator of order $1$ so that for each function $h : M \to \gC$, $\smb_D (dh):=  i[D,h]=ic(dh)$ ($\smb_D (\_)$ is the \emph{symbol} of $D$).
We want to define a version $\GRW \gA^{op}$ of the spectrum $\GRW \gA$ which has Dirac operators on $E$ as an additional piece of datum. 

For example, one could try to use the sheaf that takes $X$ to the set of all $(M,\pi,f,E,\eta,c,D)$ with $(M,\pi,f,E,\eta,c) \in \GRW \gA_n (X)$ and $D$ is a Dirac operator on $E$. We would like to define a spectrum map $\GRW \gA^{op} \to \bK \gA$ that takes the index of the operator $D$ in an appropriate sense.

However, as it stands, the operators $D$ are not suited for analytical arguments. The problem is that the pair $(M,D)$ is not necessarily complete in the sense of \cite[Definition 1.13]{JEIndex1}, so that $D$ is not necessarily self-adjoint. In the absence of self-adjointness, there is not much operator theory available for the operators $D$. 
One could try to allow only those operators $D$ such that $(M,D)$ is complete in the definition of $\GRW \gA^{op}$, but it is more convenient to include \emph{more data} into the definition instead.

\begin{defn}\label{defn:space-of-mfds-with-diracs-better}
For a test manifold $X$, $\GRW \gA^{op}_n(X)$ is the set of all tuples $ (M, \pi,f,E,\eta,c,D,g)$ where 
\begin{enumerate}
\item $(M, \pi,f,E,\eta,c) \in \GRW \gA_n (X)$,
\item $D$ is a Dirac operator on $E$ and
\item $g: M \to (0,\infty)$ is a \emph{moderating function}, i.e. a smooth function with the following property: writing $f_j: M \to \bR$ for the $j$th component of $f$, we require that the commutator
\[
[gDg, f_j]
\]
is locally (in $X$) bounded, for each $j=1, \ldots, n$.
\end{enumerate}
\end{defn}

\begin{lem}\label{lem:forgetting-diff-data}
The forgetful map $\xi:\GRW \gA^{op}_n \to \GRW \gA_n$ is a weak homotopy equivalence.
\end{lem}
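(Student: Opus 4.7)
My plan is as follows. To show that $\xi$ is a weak equivalence, I will prove that for every test manifold $X$ and every $z = (M,\pi,f,E,\eta,c) \in \GRW\gA_n(X)$, the sheaf of pairs $(D,g)$ that completes $z$ to an element of $\GRW\gA^{op}_n(X)$ is nonempty and contractible. The two pieces of added data are handled in parallel: Dirac operators compatible with $c$ form an affine space, and, given such a $D$, the moderating functions form a convex set.

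For the Dirac operator: on an open set $U_\alpha \subset M$ on which $E$ trivialises, the trivial connection yields a local Dirac operator $D_\alpha = c \circ \nabla^{\mathrm{triv}}$ with the correct symbol. A smooth partition of unity $\{\rho_\alpha\}$ on $M$ produces $D = \sum_\alpha \rho_\alpha D_\alpha$ with symbol $[D,h] = \sum_\alpha \rho_\alpha c(dh) = c(dh)$. Replacing $D$ by its formal-adjoint and odd symmetrisations, both of which preserve the symbol (using the Clifford identities $c(v)^* = -c(v)$ and $c(v)\eta = -\eta c(v)$), gives a formally self-adjoint odd Dirac operator. Any two such operators differ by a zeroth-order odd self-adjoint $\gA$-linear endomorphism of $E$, so the space of Dirac operators is affine, hence convex and contractible.

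For the moderating function, a direct computation gives $[gDg,f_j] = g^2 c(df_j)$ (since $g$ is a scalar function and so commutes both with $f_j$ and with the pointwise operator $c(df_j)$), so moderation is equivalent to $g^2|df_j|$ being locally bounded in $X$ for every $j$. Such $g$ exist: pick a smooth positive majorant $h$ of $1 + \sum_j |df_j|^2$ (via a partition of unity on $M$) and take $g \le h^{-1/2}$. The set of moderating functions is convex, since $(tg_0 + (1-t)g_1)^2|df_j|$ is bounded above by a sum of the bounded functions $g_0^2|df_j|$, $g_1^2|df_j|$ and their geometric mean, and it is also closed under the operation of passing to a smaller positive smooth function.

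The two contractibility statements combine to give contractibility of the sheaf of lifts, which yields the bijection $\xi[X]: \GRW\gA^{op}_n[X] \to \GRW\gA_n[X]$ on concordance classes: surjectivity via the construction above, injectivity via the smooth-cutoff linear interpolation $D_t = (1-\phi(t))D_0 + \phi(t)D_1$, with an analogously interpolated $g_t$. The same argument applied to iterated loop sheaves gives the higher-dimensional lifts needed for the full weak equivalence. The main obstacle is the parametric bookkeeping: one has to make the local choices (the trivialisations of $E$, the partition of unity on $M$, the majorant $h$) in a way that produces genuine elements of the sheaves over the full base $X \times Y$ rather than pointwise, and one has to verify that the operator $D$ constructed this way really lies in the analytic framework of \cite{JEIndex1} so that the moderation condition has its intended meaning.
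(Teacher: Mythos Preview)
Your argument is correct and rests on the same two observations as the paper's proof: the space of Dirac operators with the prescribed symbol is affine, and the computation $[gDg,f_j]=g\,[D,f_j]\,g=g^2 c(df_j)$ shows that the moderating condition is convex (and, crucially, independent of the particular Dirac operator chosen). The only real difference is organisational. The paper invokes the relative lifting criterion \cite[Proposition~2.18]{MW} directly: given a lift of $(D,g)$ over a neighbourhood $U$ of a closed set $Y\subset X$, one extends it to all of $X$ by first picking an arbitrary global Dirac operator $D'$ and then forming $\sqrt{\mu}\,D_U\,\sqrt{\mu}+\sqrt{1-\mu}\,D'\,\sqrt{1-\mu}$ with a cutoff $\mu$, and similarly for $g$ (the paper in fact parametrises by $h=g^2$, so that the moderating condition becomes linear and convexity is immediate). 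This packages your surjectivity/injectivity/loop-sheaf steps into a single statement and sidesteps the ``parametric bookkeeping'' you flag at the end: in particular it handles cleanly the point that for injectivity $D_0$ and $D_1$ live on different fibres of the concordance and cannot literally be interpolated until each has been extended over the whole of $M$. What your formulation buys is that it makes explicit \emph{why} the relative lifting works---namely fibre contractibility---whereas the paper's version is shorter and avoids any appeal to higher homotopy groups or iterated loop sheaves.
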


\begin{proof}
There is a familiar lifting criterion for a map of spaces to be a weak equivalence. In the context of sheaves, this is stated as \cite[Proposition 2.18]{MW}. What we have to prove is the following statement. 
Let $X$ be a test manifold and let $Y \subset X$ a closed subset. Let $ (M, \pi,f,E,\eta,c) \in  \GRW \gA_n(X)$. Assume that there is a neighborhood $U$ of $Y$ and a lift $(M|_U, \pi|_U,f|_U,E|_U,\eta|_U,c|_U,D_U,g_U) \in \GRW\gA^{op} (U)$ defined over $U$. 
Then we can find a possibly smaller neighborhood $U_0 \subset U$ of $A$ and a lift $(M, \pi,f,E,\eta,c,D,g)$ over $X$ which coincides with the given one on $U_0$. The data $(M,\pi,f,E,\eta,c)$ are untouched and will be suppressed in the notation. 

That we can define $D$ is a consequence of the well-known fact that differential operators with prescribed symbols can always be constructed (and there is no problem making them odd, Real self-adjoint if that is required). More precisely, we can find some Dirac operator $D'$ on $E$, defined over all of $X$. Choose a smooth function $\mu: X \to [0,1]$ which is $1$ near $Y$ and has support in $U$ and form $D := \sqrt{\mu} D_U \sqrt{\mu}+ \sqrt{1-\mu} D'\sqrt{1-\mu}$. 
To show that $g_U$ can be extended, let first $h : M \to (0,\infty)$ be any smooth function. Then 
\[
[h^{1/2} D h^{1/2}, f_j]  =  h^{1/2} [D , f_j ] h^{1/2} = h [D,f_j],
\]
the last equation holds because $[D,x_j]$ is of order $0$. This proves that the space of all $h: M \to (0,\infty)$ such that the commutators $[h^{1/2} D h^{1/2}, f_j]$ are all locally bounded is convex and nonempty. Pick one such function $h$. The desired extension is then $g:=\sqrt{\mu g_U^2 + (1-\mu) h}$.
\end{proof}

The notation for elements in $\GRW \gA^{op}_n$ is cumbersome. We therefore often shorten notation by only writing those parts of the datum which are relevant for the argument in question. 

\begin{lem}\label{lem:grw-op-spectrudsf}
There is a scanning map $\scan=\scan^{op}:\GRW\gA^{op}_n \to \Omega \GRW\gA^{op}_{n+1}$ such that the diagram
\[
\xymatrix{
\GRW\gA^{op}_n \ar[r]^{\scan^{op}} \ar[d]^{\xi}& \Omega \GRW\gA^{op}_{n+1}\ar[d]^{\xi}\\
\GRW\gA_n \ar[r]^-{\scan} & \Omega \GRW\gA_{n+1}
}
\]
commutes. In particular, $\scan^{op}$ is a weak equivalence if $n \geq 1$.
\end{lem}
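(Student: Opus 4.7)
The plan is to lift the scanning construction of Definition \ref{defn:scanning-map} to the enlarged data in $\GRW\gA^{op}_n$. Given $(M, \pi, f, E, \eta, c, D, g) \in \GRW\gA^{op}_n(X)$, the underlying piece $(M, \pi, f, E, \eta, c)$ already scans to $(\bR \times M, \pi', f', E', \eta', c')$ as described in Remark \ref{rem:alternativeviewpoint-scanning}, so the only thing to explain is what to do with $D$ and $g$. I would simply pull back: set $D' := \pr_M^* D$, regarded as a fibrewise operator on $\bR \times M \to \overline{\bR} \times X$ that does nothing in the $t$-direction, and set $g'(t, x) := g(x)$.

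Then two points need verification. First, that $D'$ is a Dirac operator on $(E', \eta', c')$: fibrewise $\gA$-linearity, oddness, and formal self-adjointness descend from $D$, and the symbol computation reduces to observing that $[D', h]$ acts on the fibre $\{t\} \times M_x$ as $[D, h(t, \cdot)]$, whose symbol is $i c(d h(t, \cdot))$; under the canonical identification $T_v \pi' \cong \pr_M^* T_v \pi$ this is exactly $i c'(dh)$. Second, I would check that $g'$ is a moderating function for $f'$: for $j \leq n$ we have $f'_j = f_j \circ \pr_M$, so $[g' D' g', f'_j]$ is the pullback of $[g D g, f_j]$, whose local boundedness on $\overline{\bR} \times X$ follows from local boundedness on $X$ by projecting a compact subset of $\overline{\bR} \times X$ down to $X$; for the new coordinate $f'_{n+1}(t, x) = t$ the commutator $[g' D' g', t]$ vanishes identically, since both $D'$ and $g'$ are constant in the $t$-direction.

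Commutativity of the square with $\xi$ is then built in, because $\scan^{op}$ by construction leaves the underlying $\GRW\gA_n$-datum strictly alone on both sides. For the ``in particular'' clause, two-out-of-three settles it: Lemma \ref{lem:forgetting-diff-data} says both vertical maps $\xi$ are weak equivalences, and for $n \geq 1$ Theorem \ref{grwtheorem2} gives the same for the bottom $\scan$, forcing the conclusion for $\scan^{op}$.

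The only mildly subtle point is the moderating condition in the newly added direction $f'_{n+1} = t$. Pulling $D$ back rather than adding any genuine $t$-derivative is what makes $[D', t] = 0$, trivializing the condition; an alternative natural choice (say, forming something like $D' + \eta c(\partial_t)$ in the spirit of the Bott map) would produce a bounded but nonzero commutator, still compatible with moderation but requiring an extra estimate and destroying strict commutativity of the square with $\xi$. Beyond this bookkeeping I do not anticipate any real obstacle.
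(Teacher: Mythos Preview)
Your proposal is correct and matches the paper's proof essentially line for line: the paper also pulls back $D$ to $D'$ on $\pr_M^* E$, sets $g' := g \circ \pr_M$, checks the moderating condition by noting that the commutators for $j \leq n$ are pullbacks of the old ones and that $[g'D'g', f'_{n+1}] = 0$, and concludes the weak-equivalence statement from Theorem \ref{grwtheorem2} and Lemma \ref{lem:forgetting-diff-data}. Your explicit symbol verification for $D'$ and the closing remark on why one should not add a $t$-derivative are reasonable elaborations but go slightly beyond what the paper writes down.
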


\begin{proof}
Let $(M, \pi,f,E,D,g)\in \GRW\gA^{op} (X)$. Using the description of the scanning map given in Remark \ref{rem:alternativeviewpoint-scanning}, the composition $\scan \circ \xi$ sends this element to $(\bR \times M, \id \times \pi, f',\pr_M^* E)$, where $ f'_j = f_j \circ \pr_M$ for $j \leq n$ and $f'_{n+1}= \pr_\bR$. We let $D'$ be the pulled back operator on $\pr_M^* E$ and define $g':= g \circ \pr_M$. Then $g'$ is a moderating function. To see this, we have to show that $[g'D'g',f_j']$ is bounded (locally in $\widehat{\bR} \times X$). For $j \leq n$, this follows from the assumption that $g$ is a moderating function, and for $j=n+1$, one observes that $[g'D'g',f_{n+1}']=0$. Now we define the scanning map $\scan^{op}$ by 
\[
(M, \pi,f,E,D,g)\mapsto (\bR \times M, \id \times \pi, f',\pr_M^* E,D',g').
\]
The last sentence follows from Theorem \ref{grwtheorem2} and Lemma \ref{lem:forgetting-diff-data}.
\end{proof}

\begin{lem}\label{lem:from-linear-mfds-to-dirac}
There is a map of spectra $\Lambda^{op}:\MT \gA \to \GRW \gA^{op}$ such that $\xi \circ \Lambda^{op} = \Lambda$. In particular, $\Lambda^{op}$ is a stable weak equivalence of spectra.
\end{lem}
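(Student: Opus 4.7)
The plan is to lift the construction of $\Lambda_n$ from Definition \ref{linear-mfds-to-all-mfds} to the operator-level spectrum $\GRW \gA^{op}$ by equipping the ``linear'' manifolds produced by $\lambda_n$ with a canonical Dirac operator and a constant moderating function, then to invoke two-out-of-three for weak equivalences. Given $(U, z, s) \in \MT \gA_n(X)$ with $z = (V, Q, \eta, c) \in \cC_\gA(U)$, the map $\lambda_n$ produces $M := (\pi, f)(V) \subset X \times \bR^n$ with $\pi$ the base projection, $f(x, v) = v + s(x)$, canonical identification $T_v M \cong \pi^* V$, and bundle $E := \pi^* Q$ carrying the pulled-back $\Cl(T_v M)$-structure. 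On the fibre $\pi^{-1}(x) \cong V_x$ the bundle $E$ is trivial with fibre $Q_x$, so for any local orthonormal frame $(e_i)$ of $V$ we set
\[
D := \sum_i c(e_i)\, \partial_{e_i},
\]
the fibrewise constant-coefficient Euclidean Dirac operator. This is independent of frame, depends smoothly on $x$, is $\gA$-linear, odd and formally self-adjoint (using that $c(e_i)^* = -c(e_i)$ and $c$ is constant in the fibre direction), and its symbol is $ic(\xi)$ as required. Taking $g \equiv 1$ as the moderating function we set
\[
\Lambda^{op}_n(U, z, s) := (M, \pi, f, E, \eta, c, D, 1).
\]

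The moderating condition is immediate: since $g$ is constant, $[gDg, f_j] = [D, f_j]$, and a direct computation gives $[D, f_j](x, v) = c(p_x(e_j))$, where $p_x: \bR^n \to V_x$ is orthogonal projection and $e_j$ the $j$-th standard basis vector. This has operator norm at most one and is therefore locally bounded. By construction $\xi \circ \Lambda^{op}_n = \Lambda_n$.

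Next I check that $(\Lambda^{op}_n)_n$ is a (strict) spectrum map. The underlying $\GRW \gA$-data is handled by the commutative diagram \eqref{structuremap-vs-scanning}. The scanning map $\scan^{op}$ of Lemma \ref{lem:grw-op-spectrudsf} pulls $D$ and $g$ back along $\pr_M: \bR \times M \to M$; on the other hand, $\Lambda^{op}_{n+1}$ applied to $\eta_n(U, z, s) = (U \times \bR, \pr_U^* z, s')$ with $s'(t, x) = (s(x), t)$ uses the Clifford module $\pr_U^* Q$ with $\Cl(\pr_U^* V)$-action and the fibrewise Dirac operator constructed in the same way. Under the canonical identification of the two resulting manifolds furnished by \eqref{structuremap-vs-scanning}, the Clifford structures and fibrewise flat connections match, so the two Dirac operators coincide — no contribution arises from the new $(n+1)$-st coordinate because the Clifford structure remains that of $V$, not of $V \oplus \bR$. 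The moderating function $g \equiv 1$ is trivially preserved.

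Finally, the factorization $\Lambda = \xi \circ \Lambda^{op}$, combined with Theorem \ref{grwtheorem1} (stating that $\Lambda$ is a stable weak equivalence) and Lemma \ref{lem:forgetting-diff-data} (stating that $\xi$ is a levelwise weak equivalence), forces $\Lambda^{op}$ to be a stable weak equivalence by two-out-of-three. The only part requiring real care is the bookkeeping in the previous paragraph: one has to unwind the identifications of vertical tangent bundles, Clifford modules and fibrewise flat connections to see that the scanned Dirac operator literally equals the Dirac operator built by $\Lambda^{op}_{n+1}$ on the scanned manifold, rather than merely being isomorphic to it; this is what yields strict, rather than weak, compatibility with the spectrum structure.
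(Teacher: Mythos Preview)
Your proof is correct and follows essentially the same approach as the paper: define the fibrewise constant-coefficient Dirac operator on $\pi^*Q \to V$, take $g\equiv 1$, verify the moderating condition via the commutator computation $[D,f_j]=c(p_x(e_j))$, and deduce the stable equivalence from $\xi\circ\Lambda^{op}=\Lambda$ together with Theorem~\ref{grwtheorem1} and Lemma~\ref{lem:forgetting-diff-data}. Your treatment of the spectrum-map compatibility is in fact more explicit than the paper's, which simply declares the check ``straightforward''; otherwise the arguments coincide.
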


\begin{proof}
Let $(U,W,E,\eta,c,s)\in \MT \gA_n (X)$, which under $\Lambda_n$ maps to $(W,\pi,f,\pi^* E,\eta,c)$, where $f$ is the map defined in (\ref{linear-mfds-to-all-mfds}).
We define the Dirac operator $D_E: \Gamma_{cv} (W;\pi^* E)\to \Gamma_{cv} (W;\pi^* E)$ as follows. First we fix $x \in U$ and define $D_{E,x} $ on $C^{\infty}_c (W_x; E_x)$ using an orthonormal basis $(w_1, \ldots,w_d)$ of $W_x$ by the formula
\[
D_{E,x} := \sum_{j=1}^d c(w_i) \partial_{w_i}.
\]
These operators fit together to a family $D_E$ of elliptic operators on $\pi:W\to U$. The fibrewise differential of the function $f_j: W \to \bR$, $(x,w) \mapsto \scpr{w,e_j}+ \scpr{s(x),e_j}$ is the same as the fibrewise differential of the coordinate function $l_j:w \mapsto \scpr{w,e_j}$. It follows that $[D_E,f_j]= - i \smb_{D_E} (l_j) = c (l_j)$, which is clearly bounded. Therefore, $g=1$ is a moderating function. Define
\[
\Lambda_n^{op}(U,W,E,\eta,c,s):= (W,\pi,f,\pi^* E,\eta,c,D_E,1) \in \GRW\gA_n^{op}(X).
\]
It is straightforward to check that the collection $(\Lambda_n^{op})_n$ is a map of spectra, and it is clear that $\xi \circ \Lambda^{op}=\Lambda$. The last sentence follows from Theorem \ref{grwtheorem1} and Lemma \ref{lem:forgetting-diff-data}.
\end{proof}

\subsection{Construction of the analytic index map}\label{subsec:construction-indexmap}

We are now ready to define the analytic index map
\[
\ind_n : \GRW \gA_n^{op} \to \bK( \gA)_n.
\]
For the rest of this subsection, fix a test manifold $X$ and $ (M, \pi,f,E,\eta,c,D,g) \in \GRW \gA^{op}_n (X)$. To assign to these data an element in $\bK(\gA)_n(X)$, we use the analytical results from \cite{JEIndex1}. 

Write $M_x := \pi^{-1}(x)$, $E_x:= E|_{M_x}$ and $D_x$ for the restriction of $D$ to $M_x$. In \cite[Example 2.12]{JEIndex1}, we constructed a continuous field $L_X^2 (M;E)$ of Hilbert-$\gA$-modules. Its fibre over $x \in X$ is the Hilbert-$\gA$-module $L^2 (M_x;E_x)$, the completion of the space $\Gamma_c (M_x;V_x)$ of compactly supported smooth sections with the $\gA$-valued inner product induced by the scalar product on $E$ and the volume measure on $M_x$ (recall that by definition, $M_x$ has a Riemannian metric). The space $\Gamma_{cv} (M;E)$ of vertically compactly supported sections of $E$ is a total subspace of $L^2_X(M;E)$. The weighted Dirac operator $gDg$ is a differential operator family of order $1$, and it is a densely defined symmetric unbounded operator family with initial domain $\Gamma_{cv} (M;E) \subset L^2_X (M;E)$. We first prove that the closure of $gDg$ is a self-adjoint family in the sense of \cite[Definition 2.25]{JEIndex1}.

\begin{lem}\label{lem:selfadjointness-weighted-dirac}
The closure of the weighted Dirac operator $gDg$ is a self-adjoint operator family on $L^2_X (M;E)$. 
\end{lem}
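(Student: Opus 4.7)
The plan is to reduce the statement to a fibrewise essential self\nobreakdash-adjointness claim, then use the moderating function condition together with the fibrewise properness of $f$ to obtain good cutoff sections of $E$, and finally apply an essential self\nobreakdash-adjointness criterion of the type used throughout \cite{JEIndex1}. I will suppress the data $(M,\pi,f,E,\eta,c,D,g)$ and only keep track of the pieces that enter the argument.

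First I would unwind the definition of a self\nobreakdash-adjoint operator family from \cite[Definition \ref{defn-selfadjoint}]{JEIndex1}. Since this is a statement about families of unbounded operators on a continuous field of Hilbert\nobreakdash-$\gA$\nobreakdash-modules, it typically amounts to (i) fibrewise essential self\nobreakdash-adjointness of $g_x D_x g_x$ on $L^2(M_x;E_x)$ for every $x \in X$, together with (ii) a continuity/local boundedness condition for the resolvents or for the initial domain, which here is built into the smoothness of the datum $(D,g)$. So the crucial analytic point is step (i); step (ii) will then follow from local uniformity of the bounds appearing in (i), and here is where I expect the bookkeeping to be fiddly but not deep.

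For step (i), fix $x \in X$. The key computation is that $gDg$ is a first\nobreakdash-order differential operator, $g$ commutes with $f_j$, and the symbol identity $[D,h] = c(dh)$ gives
\[
 [gDg, f_j] \;=\; g\,[D,f_j]\,g \;=\; g\,c(df_j)\,g,
\]
so the moderating function condition exactly says that $g\,c(df_j)\,g$ is locally bounded on $X$; in particular, for each $x$, the operator $g_x\,c(df_{j,x})\,g_x$ is bounded on $L^2(M_x;E_x)$, say with norm $\leq C(x)$. Choose $\phi \in C_c^\infty(\bR^n)$ with $\phi \equiv 1$ near the origin, set $\phi_R(y):=\phi(y/R)$, and put $\chi_R := \phi_R \circ f$. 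Since $f_x:M_x \to \bR^n$ is proper, $\chi_R$ has compact support in the fibre $M_x$; moreover $\chi_R \to 1$ pointwise and
\[
 [g_x D_x g_x,\chi_R] \;=\; \tfrac{1}{R}\sum_{j=1}^n (\partial_j \phi)(f_x/R)\,g_x\,c(df_{j,x})\,g_x
\]
is bounded in operator norm by a constant times $C(x)/R$, which tends to $0$. The $\chi_R$ preserve the initial domain $\Gamma_{cv}(M;E)$ because multiplication by a smooth function does.

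To conclude essential self\nobreakdash-adjointness on the fibre, I would run the standard Clifford/symmetric\nobreakdash-operator cutoff argument in the Hilbert module setting. Namely, to show $\ker((g_x D_x g_x)^* \pm i) = 0$, take $u$ in the kernel of $(g_x D_x g_x)^* - i$; a straightforward adjoint calculation gives $(g_xD_xg_x)^*(\chi_R u) = \chi_R \cdot iu - [g_xD_xg_x,\chi_R]^* u$, so $\chi_R u$ lies in the domain of the closure and $(g_xD_xg_x)(\chi_R u) \to iu$ in the graph norm (after passing to a further cutoff of the form $\chi_R u$ against an approximate identity coming from the $\Cl(V)$\nobreakdash-compatible smoothing in \cite{JEIndex1}). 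Since each $\chi_R u$ has fibrewise compact support and $g_x D_x g_x$ is symmetric on $\Gamma_c(M_x;E_x)$, the $\gA$\nobreakdash-valued inner product $\langle (g_xD_xg_x)(\chi_R u), \chi_R u\rangle$ is self\nobreakdash-adjoint in $\gA$; passing to the limit in $R$ forces $i\langle u,u\rangle$ to be self\nobreakdash-adjoint, hence $u=0$. The same argument applied to $(g_xD_xg_x)^* + i$ finishes the proof of essential self\nobreakdash-adjointness.

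The main obstacle I anticipate is not the fibrewise argument, which is routine once one has a proper function with bounded commutator, but rather verifying the continuity\nobreakdash-in\nobreakdash-$X$ half of the definition of a self\nobreakdash-adjoint \emph{family} in the sense of \cite{JEIndex1}: one needs the initial domain $\Gamma_{cv}(M;E)$ to be a suitable total subspace and the resolvent families of the closures to vary continuously with $x$. Both of these should follow from the local uniformity of the bound $C(x)$ for $\|g\,c(df_j)\,g\|$ near any point of $X$ (which is precisely the local boundedness clause in Definition \ref{defn:space-of-mfds-with-diracs-better}) combined with the standard parametrized regularity machinery from \cite{JEIndex1}, but the checking is the most delicate part of the argument.
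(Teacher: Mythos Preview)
Your approach is correct and follows the same underlying idea as the paper: use the fibrewise properness of $f$ together with the moderating condition to produce the input for a completeness/Chernoff-type essential self-adjointness criterion. The paper, however, packages this more efficiently. Rather than building cutoffs $\chi_R=\phi_R\circ f$ by hand and running the deficiency-index argument, it observes that the single function $h(y):=(1+\|f(y)\|^2)^{1/2}$ is coercive (fibrewise proper and bounded below) and computes
\[
[gDg,h] \;=\; \sum_{j=1}^{n}\frac{f_j}{(1+\|f\|^2)^{1/2}}\,[gDg,f_j],
\]
which is locally bounded by the moderating hypothesis; then it simply invokes \cite[Theorem \ref{chernoff-theorem}]{JEIndex1}. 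Your cutoff argument is essentially an in-line reproof of that theorem in this special case, so the paper's route is shorter but yours is more self-contained.

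One correction: your anticipated ``main obstacle'' about continuity of resolvents in $X$ is not actually present. In the framework of \cite{JEIndex1}, self-adjointness of an operator \emph{family} is by definition a fibrewise condition once the initial domain is a total subspace (which $\Gamma_{cv}(M;E)$ is, by construction of $L^2_X(M;E)$). The paper accordingly ends the proof immediately after the fibrewise statement, citing \cite[Definition \ref{defn-selfadjoint}]{JEIndex1} and \cite[Example \ref{self-adjointness-dirac-family}]{JEIndex1}; no separate resolvent-continuity verification is needed.
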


\begin{proof}
The differential operator $gDg$ is formally self-adjoint, because $g$ is real-valued. We want to apply \cite[Theorem 1.14]{JEIndex1}, and for that to work, we need a coercive function $h: M \to \bR$ (see \cite[Definition 1.12]{JEIndex1}) such that $[D,h]$ is locally bounded (in $X$). Define
\[
h: M \to \bR; \; h(y):=(1 + \norm{f(y)}^2)^{1/2}. 
\]
It is clear that $h$ is coercive, i.e. fibrewise proper and bounded from below.
We claim that $[gDg,h]$ is (locally in $X$) bounded. But $D$ has order $1$, whence
\[
[gDg,h] = -i\smb_{gDg} (dh) = - i \smb_{gDg} (\sum_{j=1}^{n} \frac{f_j}{(1+\norm{f}^2)^{1/2}} df_j) = 
\]
\[
=- i \sum_{j=1}^{n} \frac{f_j}{(1+\norm{f}^2)^{1/2}}\smb_{gDg} (  df_j) =   \sum_{j=1}^{n} \frac{f_j}{(1+\norm{f}^2)^{1/2}}[gDg, f_j] .
\]
Since $[gDg,f_j]$ is locally bounded (in $X$), it follows that $[gDg,h]$ is locally bounded (in $X$). Hence by \cite[Theorem 1.14]{JEIndex1}, the restriction of $gDg$ to each fibre $\pi^{-1}(x)$ is essentially self-adjoint. By \cite[Definition 2.25]{JEIndex1}, the proof is complete. See also \cite[Example 2.28]{JEIndex1} for more details on this last step.
\end{proof}

Usually, $gDg$ is not Fredholm unless $n=0$. To make up a Fredholm operator, we take a suitable tensor product with the canonical $\Cl^{n,n}$-module $\bS_{n}$ from Definition \ref{defn:cacnonicalclissoefmodules}. 
The (graded) tensor product bundle $E\otimes \bS_{n} \to M$ has the grading $\eta \otimes \iota$ and the $\Cl (T_v M \oplus \bR^{n,n})$-structure $(v,w,u) \mapsto c(v) \otimes 1 + \eta \otimes (e(w)+\eps(u))$. The map $f:M \to \bR^n$ gives the order $0$ operator $\eps (f): C^{\infty} (M; \bS_{n})\to C^{\infty} (M; \bS_{n})$ which is given by the formula
\[
\eps(f) z (y)= \sum_{j=1}^n f_j (y) \eps_j  z.
\]
This is a family of symmetric, densely defined operators parametrized by $X$ (it is also essentially self-adjoint, which we do not need to know). 
We consider the operator
\[
B:= gDg \otimes 1 + \eta \otimes \eps(f)
\]
on the $\gA$-vector bundle $E \otimes \bS_{n}$. To understand this formula, note that $\bS_{n}$ is (by definition) a trivial vector bundle. For $s \in \Gamma_{cv} (M;E)$ and $z \in \bS_{n}$, the operator $B$ is given by the formula
\[
B(s \otimes z)= gDg s \otimes z + \eta s \otimes \sum_{j=1}^n f_j \eps_j z =   gDg s \otimes z + \sum_{j=1}^n f_j\eta s \otimes  \eps_j z.
\]
Precisely as in the proof of Lemma \ref{lem:selfadjointness-weighted-dirac}, it follows from \cite[Theorem 1.14]{JEIndex1} that $B$ is essentially self-adjoint (the point is that $\eta \otimes \eps(f)$ is of order $0$ and hence commutes with the multiplication by any function). 

\begin{lem}\label{lem:weighted-dirac-plus-clifford-is-fredholm}
The operator family $B$ is a Fredholm family, and even does have compact resolvent.
\end{lem}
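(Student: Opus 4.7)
The plan is to compute $B^2$ and show that, up to a locally bounded error, the confining potential $|f|^2$ makes $B$ behave like a harmonic-oscillator-type operator, whence it has compact resolvent.

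First I would compute $B^2$. Since $D$ is odd, so is $gDg$, and hence $\eta$ anticommutes with $gDg$. Writing $\eps(f)=\sum_j f_j\eps_j$, where $f_j$ denotes multiplication by the function $f_j$ and $\eps_j$ is the constant Clifford action, a direct calculation gives
\[
 (gDg\otimes 1)(\eta\otimes\eps(f)) + (\eta\otimes\eps(f))(gDg\otimes 1) = \sum_{j=1}^n [gDg,f_j]\,\eta\otimes\eps_j,
\]
because on the one hand $gDg\cdot f_j=f_j\cdot gDg+[gDg,f_j]$, and on the other hand $gDg\eta=-\eta gDg$. The Clifford relations $\eps_i\eps_j+\eps_j\eps_i=2\delta_{ij}$ give $\eps(f)^2=|f|^2$, and therefore
\[
 B^2 = (gDg)^2\otimes 1 \;+\; |f|^2\cdot 1 \;+\; R, \qquad R:=\sum_{j=1}^n [gDg,f_j]\,\eta\otimes\eps_j.
\]

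The point of the definition of a moderating function is precisely that each $[gDg,f_j]$ is locally bounded in $X$, so $R$ is locally bounded. Since $f$ is fibrewise proper, the function $|f|^2:M\to[0,\infty)$ is fibrewise proper, and after adding a constant it is the square of the coercive function $h=(1+|f|^2)^{1/2}$ from Lemma \ref{lem:selfadjointness-weighted-dirac}. Hence locally in $X$ we have the fibrewise lower bound
\[
 B_x^2 \;\geq\; |f|^2 - C_x.
\]

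Combined with the ellipticity of the fibrewise Dirac operator $gDg$, this confining bound is exactly the hypothesis needed to conclude that $B$ has compact resolvent. Concretely, for each $x$ the fibrewise operator $B_x$ is essentially self-adjoint by the extension of Chernoff's theorem \cite[Theorem \ref{chernoff-theorem}]{JEIndex1} applied to the coercive function $h$ (exactly as in Lemma \ref{lem:selfadjointness-weighted-dirac}, since $\eta\otimes\eps(f)$ is of order $0$ and does not affect the symbol), and on the domain of $B_x^2$ one has the elliptic estimate for $gDg$ together with the $L^2$-bound by $|f|^2$; these together force a sequence in the unit ball of the graph norm of $B_x$ to be concentrated in a compact set of $M_x$ and to be bounded in $H^1$ there, so by the Rellich lemma $(B_x^2+1)^{-1}$ is compact. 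In the parametrized setting, the same argument, carried out uniformly on compact subsets of $X$, shows that $B$ is a family with compact resolvent in the sense of \cite{JEIndex1}, which by the results of \cite[\S \ref{defn:fredholm-op-hilbertfield-unbounded}]{JEIndex1} implies in particular that $B$ is Fredholm.

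The main obstacle I expect is the bookkeeping around the family/continuous-field version of the standard ``confining potential implies compact resolvent'' argument: one must verify that the local bound on $R$ and the local coerciveness of $|f|^2$ are uniform enough on compact subsets of $X$ to feed into the family Rellich-compactness criterion from \cite{JEIndex1}. The algebraic computation of $B^2$ itself is routine, and the Fredholm property is then a direct consequence of having compact resolvent.
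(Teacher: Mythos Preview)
Your proposal is correct and follows essentially the same approach as the paper: compute $B^2 = (gDg)^2\otimes 1 + |f|^2 + R$ with $R$ locally bounded, and conclude from the coercive lower bound $B^2 \geq |f|^2 - C$. The only difference is packaging: the paper invokes \cite[Theorem \ref{fredholmness-coercivity}]{JEIndex1} directly (together with \cite[Lemma \ref{lem:fedholm-local-prop}]{JEIndex1} to localize in $X$), whereas you sketch the Rellich-type argument that this theorem encapsulates; there is no need to redo that argument here.
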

\begin{proof}
We use \cite[Theorem 2.40]{JEIndex1}, and for that, we have to compute $B^2$. Let $s \in \Gamma_{cv} (M;V)$ and $z \in \bS_{n}$. Then 
\[
B^2 (s\otimes z)= B (gDg s \otimes z + \sum_{j=1}^n f_j\eta s \otimes  \eps_j z) = 
\]
\[
=gDg^2 Dg s \otimes z + \sum_{j=1}^n gDg f_j\eta s \otimes  \eps_j z + \sum_{i=1}^n f_i\eta gDg  s \otimes  \eps_j z+ \sum_{i,j=1}^n f_i f_j s \otimes \eps_i \eps_j z.
\]
The first summand is a nonnegative operator, namely $(gDg \otimes 1)^2$. The last summand is
\[
\sum_{j,i=1}^n  f_i f_j s\otimes \eps_i \eps_j z = \sum_j f_j^2 s \otimes z + \sum_{j<i} f_i f_j s \otimes (\eps_i \eps_j + \eps_j \eps_i) z = \norm{f}^2 s \otimes z.
\]
Because $\eta D + D \eta=0$, the middle two summands add up to
\[
\sum_{j=1}^n (gDg f_j\eta+f_j\eta gDg) s \otimes  \eps_j z  =  \sum_{j=1}^n [gDg, f_j]\eta  s \otimes  \eps_j z,
\]
so altogether, we obtain
\[
B^2 = (gDg \otimes 1)^2+ \norm{f}^2 + \sum_{j=1}^n [gDg,f_j]\eta \otimes \eps_j.
\]
By assumption, $\sum_{j=1}^n [gDg,f_j]\eta \otimes \eps_j$ is bounded (locally in $X$). We can restrict our attention to a subset of $X$ over which $\norm{\sum_{j=1}^n [gDg,f_j]\eta \otimes \eps_j} \leq C$, by \cite[Lemma 2.18]{JEIndex1}. Altogether, these computations prove that 
\[
B^2 \geq -C + \norm{f}^2,
\]
and since $\norm{f}^2: M \to \bR$ is fibrewise proper and bounded from below (i.e. coercive), \cite[Theorem 2.40]{JEIndex1} shows that $B$ is a Fredholm family with compact resolvent.
\end{proof}

We have ``consumed'' the $\Cl^{0,n}$-action in the definition of $B$, but the $\Cl^{n,0}$-action $e$ is still there. We observe that $B$ is $\Cl^{n,0}$-antilinear, because
\[
 B (\eta \otimes e(v)) + (\eta \otimes e(v)) B = 1 \otimes (e(v) \eps(f)+\eps(f) e (v)) + (\eta gDg+ gDg\eta) \otimes e(v)=0.
\]
Therefore, $(L^2_X (M;V), \eta \otimes \iota, e, B) \in \bK(\gA)_{n}(X)$, by the definition of $\bK(\gA)_{n}(X)$. The construction given is completely natural (since the auxiliary function $g$ was built into the definition of the sheaf $\GRW\gA_n^{op}$), and so this defines a map of sheaves
\[
\ind_n: \GRW\gA_n^{op} \to \bK ( \gA)_n,
\]
the \emph{analytical index}.

\begin{prop}\label{analytical-index-vs-scanning}
The collection $(\ind_n)_n$ is a weak map of spectra $\GRW \gA^{op} \to \bK (\gA)$ in the sense of Lemma \ref{lem:strictification-of-spectrummaps}. In other words, the diagram
\[
\xymatrix{
\GRW\gA^{op}_n \ar[r]^-{\scan}\ar[d]^{\ind_n} & \Omega \GRW \gA^{op}_{n+1} \ar[d]^{\Omega \ind_{n+1}}\\
\bK (\gA)_{n} \ar[r]^-{\bott} & \Omega \bK (\gA)_{n+1}
}
\]
commutes up to homotopy.
\end{prop}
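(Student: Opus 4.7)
The plan is to unwind both compositions explicitly and exhibit a canonical isomorphism of the resulting $K^{n+1,0}(\gA)$-cycles over $\overline{\bR}\times X$; then invoking \cite[Lemma \ref{lem:properties-concordances}]{JEIndex1}, by which an isomorphism of $K$-cycles yields a natural concordance (this is exactly how the same type of issue was handled in the proof of Lemma \ref{lem:thom-class-is-spectrum-map}), furnishes the required homotopy.

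Fix $z=(M,\pi,f,E,\eta,c,D,g) \in \GRW\gA^{op}_n(X)$. Going first down then right, $\ind_n(z)=(L^2_X(M;E\otimes \bS_n),\eta\otimes\iota_n,e_n,B)$ with $B=gDg\otimes 1+\eta\otimes\eps(f)$, and $\bott$ then produces the cycle on $\overline{\bR}\times X$ obtained by pulling back to $\bR\times X$, tensoring with $\bS_1$, installing the operator $B\otimes 1 + (\eta\otimes\iota_n)\otimes t\,\eps_1$ together with the Clifford action $e_n\otimes 1 + (\eta\otimes\iota_n)\otimes e_1$, and extending by zero at $\pm\infty$. Going first right then down, $\scan^{op}(z)=(\bR\times M,\id\times\pi,(f\circ\pr_M,\pr_\bR),\pr_M^*E,D',g\circ\pr_M)$ sits over $\overline{\bR}\times X$ with empty fibers at $\pm\infty$; then $\ind_{n+1}$ produces, on $\bR\times X$, the cycle with fiber $L^2(M_x;E_x\otimes\bS_{n+1})$ at $(t,x)$, grading $\eta\otimes\iota_{n+1}$, Clifford action $e_{n+1}$, and operator $gDg\otimes 1 + \eta\otimes\bigl(\sum_{j\le n}f_j\eps_j + t\,\eps_{n+1}\bigr)$, with zero fibers at $\pm\infty$.

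Under the canonical isomorphism $\bS_{n+1}\cong\bS_n\otimes\bS_1$ of graded Clifford modules from Definition \ref{defn:cacnonicalclissoefmodules} (sending $e_j,\eps_j$ to $e_j\otimes 1,\eps_j\otimes 1$ for $j\le n$ and $e_{n+1},\eps_{n+1}$ to $\iota_n\otimes e_1,\iota_n\otimes\eps_1$), the gradings, Clifford actions and operators just described match term by term fiberwise; since the continuous field $L^2_{\bR\times X}(\bR\times M;\pr_M^*E\otimes\bS_{n+1})$ is canonically identified with $\pr_X^*L^2_X(M;E\otimes\bS_n)\otimes\bS_1$ (because $\bR\times M\to\bR\times X$ is the pullback of $M\to X$ along $\pr_X$), the pointwise matching globalises to an isomorphism of continuous fields on $\bR\times X$, and the two extensions at $\pm\infty$ agree trivially. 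To see that this is an isomorphism of $K$-cycles and not merely of the underlying data, one checks that on both sides the unbounded operator is the essentially self-adjoint closure of the same differential expression on the joint core $\Gamma_{cv}$ of vertically compactly supported sections, using Lemma \ref{lem:selfadjointness-weighted-dirac} in both directions; the closures and their domains therefore agree.

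I expect the only real obstacle to be notational, namely keeping the grading signs and Clifford conventions aligned while writing out the operators above; there is no analytic substance beyond what went into the construction of $\ind_n$. The square was in effect engineered so as to commute on the nose up to the canonical identification of Clifford modules, which is why the extension-by-zero step is painless (the scanning side yields empty fibers at $\pm\infty$ automatically, matching the explicit $j_!$ on the Bott side).
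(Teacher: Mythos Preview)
Your proposal is correct and follows essentially the same route as the paper: unwind both compositions, use the canonical isomorphism $\bS_{n+1}\cong\bS_n\otimes\bS_1$ to match the resulting cycles, and invoke \cite[Lemma \ref{lem:properties-concordances}]{JEIndex1} to pass from a natural isomorphism to a natural concordance. The paper even remarks that ``all definitions were designed so that this is essentially a tautology''; your additional comments on matching the continuous fields and operator closures are more careful than the paper's treatment but do not change the argument.
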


\begin{proof}
Before we begin the proof, we emphasize that all definitions were designed so that this is essentially a tautology. Let $\gv:= (M, \pi,f,E,\eta,c,D,g) \in \GRW \gA^{op}_n (X)$. We will provide a natural (with respect to maps of test spaces) isomorphism between the cycles $\ind_{n+1}(\scan (\gv))$ and $\bott (\ind_n (\gv)) \in \Omega \bK(\gA)_{n+1}(X)$. This natural isomorphism then provides a natural concordance, by \cite[Lemma 3.6]{JEIndex1}. Let us first compute $\ind_{n+1}(\scan (\gv))$. By Lemma \ref{lem:grw-op-spectrudsf},
\[
\scan (\gv)=(\bR \times M, \id \times \pi, f',\pr_M^* E,D',g')\in \GRW \gA^{op}_{n+1} (\widehat{\bR} \times X ,\{\pm \infty\}\times X).
\]
The fibre $(\id \times \pi)^{-1} (t,x)$ is empty if $t=\pm \infty$ and equal to $\pi^{-1}(x)$ otherwise, and the restriction of $\pi^*_M E$ to $(\id\times \pi)^{-1}(t,x)$ coincides with $E|_{\pi^{-1}(x)}$ with all structures (Clifford structure, grading, Dirac operator and moderating function), and $f'(t,y)= f(y)+t e_{n+1}$.

According to the construction of the analytical index, $\ind_{n+1} (\scan (\gv))$ is represented by the following $K^{n+1,0}(\gA)$-cycle on $\bR\times X$ (extended by zero to $\widehat{\bR} \times X$):
\[
(\pr_X^* L^2_X (M;E \otimes \bS_{n+1}), \eta \otimes \iota_{n+1}, c \otimes e, g'D'g' \otimes 1+ \eta \otimes \eps(f')).
\]
On the other hand
\[
 \ind_n (\gv) = (L^2_X (M;E\otimes \bS_{n}), \eta \otimes \iota_n , c \otimes e, gDg + \eps(f)),
\]
and by the definition of the Bott map,
\[
\bott (\ind_n (\gv))= j_! \pr_X^* ( L^2_X (M;E\otimes \bS_{n})\otimes \bS_{1}, \eta \otimes \iota_n \otimes \iota_1, c \otimes e \otimes e,gDg+\eps(f)+ t \eps_{n+1})
\]
where $j: \bR \times X \to \widehat{\bR} \times X$ is the inclusion. Now we use that the operator family $g'D'g'$ is the same as the pullback of the original operator family $gDg$ along the projection map $\bR \times X\to X$, and we can write $\eps(f')$ at $(t,y )\in \bR \times M$ as $\eps (f(y))+ t \eps_{n+1}$. 
Moreover, under the natural isomorphism $\bS_{n+1}\cong\bS_{n} \otimes \bS_{1}$, we can write the grading $\iota=\iota_{n+1}=\iota_n \otimes \iota_1$ and 
\[
g'D'g' \otimes 1+ \eta \otimes \eps(f') = gDg \otimes 1 \otimes 1+ \eta \otimes \eps(f)\otimes 1 + \eta \otimes \iota_n \otimes t\eps_{n+1}.
\]
We obtain a natural isomorphism
\[
\bott (\ind_n (\gv))\cong j_! \pr_X^* ( L^2_X (M;E\otimes \bS_{n+1}), \eta \otimes \iota_{n+1} , c \otimes e,gDg+\eps(f)+ t \eps_{n+1})
\]
which finishes the proof.
\end{proof}

\section{The index theorem}\label{sec:index-theorem}

\subsection{Statement of the index theorem}\label{subsec:statement-index}

The results of the previous section can be summarized in a diagram
\[
 \xymatrix{
 \MT \gA  \ar[r]^-{\Lambda^{op}} & \GRW \gA^{op}  \ar[d]^{\xi} \ar[r]^-{\ind} & \bK (\gA)\\
  & \GRW \gA &
 }
\]
of spectra and (weak) spectrum maps (in the category $\Sheaf$). The map $\xi$ is a levelwise equivalence of spectra, by \ref{lem:forgetting-diff-data}, and $\Lambda^{op}$ is a stable equivalence of spectra, since the composition $\Lambda = \xi \circ \Lambda^{op}$ is, by Theorem \ref{grwtheorem1}. In Definition \ref{defn:topological-indexnasn}, we defined the topological index, a weak spectrum map $\topind: \MT \gA \to \bK(\gA)$. 
\begin{thm}[The index theorem]\label{thm:index-theorem}
For each $n\geq 0$, there is a homotopy $\ind_n \circ \Lambda_n^{op} \sim \topind_n: \MT \gA_n \to \bK(\gA)_n$.
\end{thm}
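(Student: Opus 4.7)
The plan is to reduce the theorem to a natural concordance of two explicit $\theta$-twisted $K(\gA)$-cycles on $\cC_\gA$, and then to construct that concordance using the supersymmetric harmonic oscillator. Fix $(U,z,s) \in \MT\gA_n(X)$ with $z = (V,Q,\eta,c)$. By Lemma \ref{lem:from-linear-mfds-to-dirac} and the construction of \S\ref{subsec:construction-indexmap}, the cycle $\ind_n \circ \Lambda_n^{op}(U,z,s)$ has underlying Hilbert-$\gA$-module field $L^2_X(V;\pi^*Q) \otimes \bS_n$, grading $\eta \otimes \iota_n$, Clifford structure $\eta \otimes e_n$, and operator $D_Q \otimes 1 + \eta \otimes \eps_n(f)$, where $f(x,v) = v + s(x)$. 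Using the canonical isometric splittings $\bS_n \cong \bS_V \otimes \bS_{V^\bot}$ and $\iota_n = \iota_V \otimes \iota_{V^\bot}$ together with the induced decompositions $e_n(v+w) = e_V(v)\otimes 1 + \iota_V \otimes e_{V^\bot}(w)$ and $\eps_n(v+s(x)) = \eps_V(v)\otimes 1 + \iota_V \otimes \eps_{V^\bot}(s(x))$, a direct verification identifies this cycle with $\thom(\gx_V)(U,z,s)$, where $\gx_V$ is the $\theta$-twisted $K(\gA)$-cycle on $\cC_\gA$ defined by
\[
\gx_V(z) := \bigl( L^2_U(V;\pi^*Q) \otimes \bS_V,\; \eta \otimes \iota_V,\; \eta \otimes e_V,\; D_Q + \eta \otimes \eps_V(v) \bigr).
\]
Since $\topind_n = \thom(\gy_V)_n$ by Definition \ref{defn:topological-indexnasn} with $\gy_V(z) := (Q, \eta, c, 0)$, Lemma \ref{lem:thom-class-is-spectrum-map} reduces the theorem to exhibiting a natural concordance $\gx_V \sim \gy_V$ of $\theta$-twisted $K(\gA)$-cycles on $\cC_\gA$.

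To construct this concordance I will exploit the supersymmetric structure of $B := D_Q + \eta \otimes \eps_V(v)$. In any local orthonormal frame $w_1, \dots, w_d$ of $V$ a direct computation gives
\[
B^2 = -\Delta_V \otimes 1 + 1 \otimes |v|^2 - \sum_{i=1}^{d} \eta c(w_i) \otimes \eps_V(w_i),
\]
a fibrewise parametrized harmonic oscillator with bounded perturbation. The operators $T_i := \eta c(w_i) \otimes \eps_V(w_i)$ are commuting self-adjoint involutions, so $B$ is self-adjoint with discrete spectrum and a uniform positive gap above its finite-rank kernel $K(z) := \ker B$; by functional calculus the spectral projection onto $K(z)$ varies continuously in $z$, and $K(z)$ forms a continuous field of finitely generated projective graded Hilbert-$\gA$-modules on $U$. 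The orthogonal decomposition $L^2_U(V;\pi^*Q) \otimes \bS_V = K \oplus K^\perp$ yields a natural isomorphism of $K^V(\gA)$-cycles
\[
\gx_V \;\cong\; \bigl(K,\,(\eta\otimes\iota_V)|_K,\,(\eta\otimes e_V)|_K,\,0\bigr) \;\oplus\; \bigl(K^\perp,\,(\eta\otimes\iota_V)|_{K^\perp},\,(\eta\otimes e_V)|_{K^\perp},\,B|_{K^\perp}\bigr),
\]
in which the second summand is degenerate since $B|_{K^\perp}$ is invertible with uniform lower bound. An explicit Clifford-algebraic calculation, using the Gaussian ground state $e^{-|v|^2/2}$ tensored with a natural element of the joint $(+1)$-eigenspace of the $T_i$ in $Q \otimes \bS_V$, will then produce a natural $\gA$-linear isomorphism $\Psi: Q \stackrel{\cong}{\to} K$ intertwining the gradings and Clifford structures, realizing the first summand as $\gy_V$.

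Since isomorphisms of $K^V(\gA)$-cycles yield canonical concordances (\cite[Lemma \ref{lem:properties-concordances}]{JEIndex1}) and the sheaf $\bD(\gA)$ of degenerate cycles is contractible (\cite[\ref{k-group-groupcomplete}]{JEIndex1}), the degenerate summand is naturally concordant to the zero cycle, and chaining these concordances produces the desired natural concordance $\gx_V \sim \gy_V$. The main technical obstacle will be the natural construction of $\Psi$: one must verify that the joint $(+1)$-eigenspace of the $T_i$ in $Q \otimes \bS_V$ is a graded $\gA$-submodule of rank equal to that of $Q$, transforms appropriately under $O(d)$-changes of the frame $w_i$ so as to glue to a well-defined sub-bundle when $V \subset \bR^n$ varies non-trivially, and that the induced Clifford structure on $K$ agrees with $c$ on $Q$ under $\Psi$. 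Once this Clifford-algebraic analysis is settled, continuity in $z$ follows from the continuity of the spectral projection by functional calculus, and the naturality under maps of test manifolds is automatic from the naturality of $B$ and its spectral resolution.
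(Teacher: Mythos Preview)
Your reduction is exactly the paper's: both arguments unwind $\ind_n\circ\Lambda_n^{op}$ via the splitting $\bS_n\cong\bS_V\otimes\bS_{V^\bot}$ and recognise the result as $\thom$ of a twisted cycle built from the operator $B_0=D_Q\otimes 1+\eta\otimes\eps_V(\id_V)$ on $L^2_U(V;\pi^*(Q\otimes\bS_V))$, so that the theorem reduces to a natural concordance between that cycle and $(Q,\eta,c,0)$.

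Where you diverge from the paper is in the linear index computation itself. You propose to analyse $\ker B_0$ directly as the Gaussian tensored with the joint $(+1)$-eigenspace of the $T_i=\eta c(w_i)\otimes\eps(w_i)$, and then to produce a natural isomorphism $\Psi:Q\to\ker B_0$. You correctly flag that constructing $\Psi$ is the crux, but you have not said how to do it, and there is no obvious natural map from $Q$ into that eigenspace (e.g.\ $q\otimes 1$ is \emph{not} an eigenvector of any $T_i$). The paper resolves this by an explicit frame-independent Clifford element
\[
\psi=\exp\Bigl(\tfrac{\pi}{4}\sum_{i=1}^d c_i e_i\Bigr)=2^{-d/2}\prod_{i=1}^d(1-e_ic_i),
\]
which satisfies $\psi c_j=e_j\psi$, $\psi\eps_j=\eps_j\psi$ and hence conjugates $B_0$ into $B_1=\eta\otimes(D+F)$. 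This immediately gives $\ker B_0=\psi^{-1}(Q\otimes\Lambda^0 V^*)\cdot e^{-|v|^2/2}$ and supplies your missing $\Psi$ as $q\mapsto e^{-|v|^2/2}\,\psi^{-1}(q\otimes 1)$; frame-independence of $\psi$ is checked by observing it is fixed by plane rotations and permutations of the $w_i$. So your outline is correct, but the step you defer is precisely the one nontrivial construction, and the paper's $\psi$ is the concrete Clifford-algebraic device that fills it.

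One further point: under $\psi$ the $\Cl(V)$-structure $\eta\otimes e_V$ becomes $-c\otimes 1$, not $c\otimes 1$. Thus the kernel summand you obtain is $(Q,\eta,-c,0)$, and an additional canonical isomorphism via $\eta$ (which conjugates $-c$ to $c$) is needed to reach $(Q,\eta,c,0)$. Your statement that $\Psi$ ``intertwines the Clifford structures'' is therefore off by this sign and should be amended accordingly.
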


Let us now give a reformulation of the index theorem from which it becomes apparent that it generalizes the classical Atiyah-Singer theorem. Let $X \in \Mfds$ be of finite type (i.e., homotopy equivalent to a finite CW complex) and let $\gv \in \GRW \gA^{op}_n (X)$ be an element. Via the bijection \eqref{eqn:sheaf-representing}, $\gv$ gives rise to map $f_\gv: X \to |\GRW\gA^{op}_n|$, unique up to homotopy. We want to compute the composition $|\ind_n \circ f_\gv = f_{\ind_n (\gv)} : X \to |\bK(\gA)_n|$. Because $\tau_n : |\bK(\gA)_n| \to \Omega^{\infty-n} |\bK(\gA)|$ is a weak equivalence, we can equally ask for a computation of $\tau_n \circ |\ind_n| \circ f_\gv$.

Using the strictification procedure from Lemma \ref{lem:strictification-of-spectrummaps}, we obtain a spectrum map
\[
 \widetilde{|\ind|}: |\GRW\gA^{op}| \to |\bK(\gA)|.
\]
Furthermore, $|\topind|: |\MT \gA| \to |\bK(\gA)|$ is a weak spectrum map, and it also has a strictification $\widetilde{|\topind|}$. The map $|\Lambda^{op}|: |\MT \gA| \to |\GRW \gA|$ is already a spectrum map. 
Consider the diagram
\begin{equation}\label{eq:diagram-space-level-index-theorem}
\xymatrix{
\Omega^{\infty-n} |\MT \gA| \ar[rr]^{\Omega^{\infty-n} |\Lambda^{op}|}& &\Omega^{\infty-n} |\GRW\gA^{op}|\ar[rr]^{\Omega^{\infty-n} \widetilde{|\ind|}} & &\Omega^{\infty-n} |\bK(\gA)|\\
|\MT \gA_n| \ar[u]^{\tau_n} \ar[rr]^{|\Lambda^{op}_n|}& & |\GRW \gA^{op}_n| \ar[rr]^{|\ind_n|}\ar[u]^{\tau_n} & &|\bK(\gA)_n|\ar[u]^{\tau_n}_{\simeq}.
}
\end{equation}
The left square commutes for formal reason, and the right square commutes up to homotopy because $\widetilde{|\ind|}$ is a strictification of $(|\ind_n|)_n$. As in remark \ref{rem:pontrjaginthom}, we let $p_n: \Omega^{\infty-n} |\GRW \gA^{op}| \to \Omega^{\infty-n} |\MT \gA|$ be a homotopy inverse to $\Omega^{\infty-n} |\Lambda^{op}|$ and let $\PT_n := p_n \circ \tau_n$. It follows that
\[
\tau_n \circ |\ind_n| \circ f_\gv \sim (\Omega^{\infty-n} \widetilde{ |\ind|}) \circ \tau_n \circ f_\gv \sim (\Omega^{\infty-n} \widetilde{|\ind|}) \circ (\Omega^{\infty-n} |\Lambda^{op}|) \circ p_n \circ \tau_n\circ f_\gv  =
\]
\[
=(\Omega^{\infty-n} (\widetilde{|\ind|} \circ|\Lambda^{op}|) ) \circ \PT_n  \circ f_\gv .
\]
For each $n$, there are homotopies
\[
\widetilde{|\ind|}_n \circ |\Lambda_n^{op}| \sim |\ind_n| \circ |\Lambda^{op}_n| \stackrel{\eqref{thm:index-theorem}}{\sim}|\topind_n| \sim \widetilde{|\topind|}_n,
\]
by Theorem \ref{thm:index-theorem}. Hence we can apply Lemma \ref{lem:weakhomotopy-infinitelopopmap} to the spectrum maps $\widetilde{|\ind|} \circ |\Lambda^{op}|$ and $\widetilde{|\topind|}$. It follows that 
\[
(\Omega^{\infty-n} (\widetilde{|\ind|} \circ |\Lambda^{op}|) ) \circ \PT_n  \circ f_\gv \sim (\Omega^{\infty-n} \widetilde{|\topind|}  ) \circ \PT_n  \circ f_\gv .
\]
Since any finite CW complex is homotopy equivalent to a manifold of finite type, we conclude
\begin{corollary}\label{cor:indextheorem-space-version}
The two maps 
\[
\tau_n \circ |\ind_n|, \; (\Omega^{\infty-n} \widetilde{|\topind|}  ) \circ \PT_n:  |\GRW\gA_n^{op}| \to \Omega^{\infty-n} |\bK(\gA)| 
\]
are weakly homotopic. 
\end{corollary}

For $\gA=\gC$ and $n=0$, this is the version of the index theorem stated in \cite{EbertV} and is equivalent to the classical Atiyah-Singer family index theorem.

\subsection{The linear index theorem}

The proof of Theorem \ref{thm:index-theorem} has two parts. One is to compute the composition $\ind_n \circ \Lambda^{op}_n$ and rewrite the result in the form $\thom (\gy_n)$, where $\gy$ is a concretely given $\theta_\gA$-twisted $K(\gA)$-cycle on $\cC_\gA$. The other part is to prove that $\gy$ is naturally concordant to the cycle $\gx$ defined in Example \ref{ex:twisted-K-cycle-withoutoperator}. This step contains some substantial analytical arguments, and is carried out in this section. 

Let us begin with a classical and fairly elementary index computation. On the space $L^2 (\bR^d;\bS_d)$ of $L^2$-functions with values in the canonical Clifford module, we have the two (unbounded) operators $D$ and $F$ given by
\[
D = \sum_j e_j \partial_j ; \; F= \sum_j x^j \eps_j
\]
(here $x^j: \bR^d \to \bR$ denotes the $j$th coordinate function). Using our previous conventions, $F=\eps(\id_{\bR^d})$. The \emph{Bott-Dirac operator} or \emph{supersymmetric harmonic oscillator} is the operator $B=D+F$. 
We take $C^{\infty}_c (\bR^d; \bS_{d})$ as initial domain; $D$ and $F$ are symmetric on this domain.
The following result is more or less a standard result, see e.g. \cite[Proposition 1.16]{HigsonGuent}.

\begin{prop}\label{prop:index-supersymmetric-oscillator}\mbox{}
\begin{enumerate}
\item The operators $D$, $F$ and $B$ are formally self-adjoint, $O(d)$-equivariant and odd with respect to the grading $\iota$ on $\bS_{d}$.
\item The operator $B$ is essentially self-adjoint, and Fredholm.
\item The kernel of $B$ is $1$-dimensional, spanned by the even $O(n)$-invariant function $e^{-|x|^2 } 1$ (here $1 \in \Lambda^0 \bR^d\subset \bS_{d}$). Hence $\ind (B)=1\in KO^0_{O(d)} (*)$. 
\item Moreover $\spec (B) \cap (-1,1)=\{0\}$. 
\end{enumerate}
\end{prop}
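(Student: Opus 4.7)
The plan is to reduce everything to an explicit computation of $B^2$ using the Clifford relations $\{e_i,e_j\}=-2\delta_{ij}$, $\{\eps_i,\eps_j\}=2\delta_{ij}$, $\{e_i,\eps_j\}=0$ (which hold because $\bS_d$ carries the canonical $\Cl(\bR^d \oplus (\bR^d)^-)$-structure, Definition \ref{defn:cacnonicalclissoefmodules}), and then to invoke the well-known spectral theory of the scalar harmonic oscillator.

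First, part (1) is essentially tautological. Formal self-adjointness of $D$ on $C^\infty_c(\bR^d;\bS_d)$ follows from skew-adjointness of $\partial_j$ combined with skew-adjointness of $e_j$, while $F$ is multiplication by a self-adjoint bundle endomorphism, hence self-adjoint. Both operators are odd because $e_j$ and $\eps_j$ anticommute with $\iota$, and both are $O(d)$-equivariant because the expressions $\sum e_j \partial_j$ and $\sum x^j \eps_j$ are invariant under simultaneous rotation of coordinates and of the Clifford generators.

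Next, I compute $B^2 = D^2 + F^2 + \{D,F\}$. Using $\{e_i,e_j\}=-2\delta_{ij}$ one gets $D^2 = -\Delta$; using $\{\eps_i,\eps_j\}=2\delta_{ij}$ one gets $F^2 = |x|^2$; and using $\{e_i,\eps_j\}=0$ together with $[\partial_i,x^j]=\delta_{ij}$ one finds $\{D,F\} = \sum_i e_i\eps_i$. Since $e_i\eps_i = \ins_i^*\ins_i - \ins_i\ins_i^* = 2\ins_i^*\ins_i - 1$ on $\Lambda^*\bR^d$, the operator $\sum_i e_i\eps_i$ acts as multiplication by $2k-d$ on the subspace $\Lambda^k(\bR^d)^*\subset\bS_d$. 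Decomposing $L^2(\bR^d;\bS_d) = \bigoplus_{k=0}^d L^2(\bR^d)\otimes\Lambda^k(\bR^d)^*$ (a decomposition preserved by $B^2$), the restriction of $B^2$ to the $k$th summand is the shifted harmonic oscillator
\[
 H_k := -\Delta + |x|^2 + (2k-d).
\]
This is the crux of the argument: everything else follows from the classical theory of the $d$-dimensional harmonic oscillator $-\Delta+|x|^2$ on $L^2(\bR^d)$, which is essentially self-adjoint on $C^\infty_c(\bR^d)$ and has pure-point spectrum $\{d+2|n| : n\in\bN^d\}$ with orthonormal Hermite eigenbasis, the ground state being $e^{-|x|^2/2}$ with eigenvalue $d$.

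For parts (2), (3), (4) I then read off the conclusions. Essential self-adjointness of $B^2$ on each summand follows from the classical harmonic oscillator result, and compactness of its resolvent follows because the spectrum $\{2(|n|+k):n\in\bN^d\}$ of $H_k$ tends to infinity with finite multiplicities; alternatively one can quote \cite[Theorem \ref{fredholmness-coercivity}]{JEIndex1} applied to $B^2 \geq |x|^2 - d$, since $|x|^2$ is coercive. Essential self-adjointness of $B$ itself then follows because $B$ is symmetric and $B^2$ has compact resolvent. For the kernel, $\ker(B)=\ker(B^2)$ intersects $H_k$-eigenspaces only when $2(|n|+k)=0$, forcing $k=0$ and $n=0$; the corresponding eigenvector is $e^{-|x|^2/2}\otimes 1 \in L^2(\bR^d)\otimes\Lambda^0$, which is $O(d)$-invariant and even, giving a one-dimensional kernel and $\ind(B)=1\in KO^0_{O(d)}(*)$. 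The spectral gap claim is immediate: the next eigenvalue of $B^2$ is $2$, so $\spec(B)\subset\{0\}\cup\{\pm\sqrt{2k}:k\geq 1\}$ and hence $\spec(B)\cap(-1,1)=\{0\}$.

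The only genuinely nontrivial ingredient is the harmonic oscillator spectral decomposition, which is standard and can be cited. The main place where care is required is the passage from essential self-adjointness of $B^2$ to that of $B$; here one uses that for a symmetric operator $B$ on $C^\infty_c$, if $B^2$ (with the same initial domain) is essentially self-adjoint with nonnegative spectrum, then so is $B$, e.g.\ by observing that $C^\infty_c$ is a core for $\overline{B}$ since it is a core for $\overline{B^2}$ and the Hermite functions (which are dense in $C^\infty_c$ in graph norm) diagonalize both.
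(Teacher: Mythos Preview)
Your argument is correct and is essentially the standard textbook computation (the one the paper cites from \cite{HigsonGuent}). The paper itself does not prove the proposition in full: it refers to \cite[Proposition 1.16]{HigsonGuent} for (3) and (4), and for (2) it takes a different route, using its own abstract machinery rather than the full spectral decomposition. Specifically, for essential self-adjointness the paper observes that $[B,\ell]$ is bounded for every linear form $\ell$ and invokes the Chernoff-type result behind Lemma \ref{lem:selfadjointness-weighted-dirac}; for the Fredholm property it computes $B^2 = D^2 + |x|^2 + \nu$ with $\nu = \sum_i e_i\eps_i$ of norm $\leq d$ and applies \cite[Theorem \ref{fredholmness-coercivity}]{JEIndex1}. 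Your route---diagonalising $B^2$ explicitly as $H_k = -\Delta + |x|^2 + (2k-d)$ on each $\Lambda^k$-summand---has the advantage of yielding (2), (3), (4) in one stroke, at the cost of making the self-adjointness of $B$ slightly indirect (via that of $B^2$ and the Hermite eigenbasis); the paper's bounded-commutator argument is cleaner for just that point. Incidentally, your kernel vector $e^{-|x|^2/2}\cdot 1$ is the correct one; the $e^{-|x|^2}$ in the statement is a typo.
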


In the context of this paper, the easiest way to the that $B$ is self-adjoint is to observe that for each linear form $\ell$ on $\bR^d$, $[B,\ell]$ is bounded and to use Lemma \ref{lem:selfadjointness-weighted-dirac}. The easiest way to see that $B$ is Fredholm is to observe that $B^2 = D^2+ |x|^2+ \nu$, where $\nu:=\sum_{i=1}^d e_i\eps_i $ obviously has norm $\leq d$, and to invoke \cite[Theorem 2.40]{JEIndex1}. 

Due to the $O(d)$-equivariance, the construction can be carried over to the parametrized case, when $\pi:V \to X$ is a Riemannian vector bundle (say $X$ is a manifold and $\pi$ is smooth). Consider $\bS_{V} \to X$, which is a fibrewise irreducible $\Cl (V \oplus V^-)$-module bundle. Denote the Clifford multiplication of $V$ by $e$ and that of $V^-$ by $\eps$. 
Then on the bundle $\pi^* \bS_{V} \to V$, we have families $D$, and $B$ of Dirac operators, parametrized by $X$. In a single fibre $V_x$, and with respect to an orthonormal basis $(v_1,\ldots,v_d)$ of $V_x$, they are defined by 
\[
D = \sum_{j=1}^d e(v_j) \partial_j; \; F = \sum_{j=1}^d x^j \eps (v_j).
\]
Because of the $O(d)$-equivariance, this does not depend on the choice of the orthonormal basis.

Now let $(E,\eta,c)$ be (smooth) bundle of finitely generated projective Hilbert-$\gA$-modules with graded $\Cl (V)$-structure. The bundle $\pi^* (E \otimes \bS_{V})\to V$ is a bundle of finitely generated projective Hilbert-$\gA$-modules. It has a grading $\eta \otimes \iota$ and a $\Cl(V \oplus V \oplus V^-)$-structure. The Clifford action by the first $V$-summand is by $c \otimes 1$, that by the second $V$-summand by $\eta \otimes e$, and that by the $V^-$-summand by $\eta \otimes \eps$. Let $D_E$ be the Dirac operator of the $\Cl(V)$-$\gA$-bundle $\pi^* E \to V$. On a single fibre over $x \in X$ and with respect to an orthonormal basis $(v_1, \ldots, v_d)$ of $V_x$, it is given by 
\[
D_E = \sum_{j=1}^d c(v_j) \partial_{j}. 
\] 
Now define a differential operator on $\pi^* (E \otimes \bS_{V})$ by 
\[
B_0:=D_E \otimes 1+ \eta \otimes F.
\]
This is an odd and symmetric unbounded operator family on the continuous field $L^2_X (V;\pi^* (E \otimes \bS_{V}))$.

\begin{lem}\label{lem:bott-dirac-regularity}
The operator family $B_0$ is an (unbounded) self-adjoint Fredholm family and it defines a $K^V(\gA)$-cycle on $X$:
\[
\gy(E, \eta,c):=(L^2_X (V;\pi^* (E \otimes \bS_{V})), \eta \otimes \iota, \eta \otimes e, B_0)\in \bK^{V} \gA(X).
\]
\end{lem}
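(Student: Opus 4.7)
The statement packages four assertions about $B_0$: formal self-adjointness, essential self-adjointness (so that its closure is a self-adjoint family), Fredholmness, and the oddness and $\Cl(V)$-antilinearity relations required for a $K^V(\gA)$-cycle. Formal self-adjointness is built in: the Dirac operator $D_E$ is symmetric (the generators $c(v_j)$ being odd and skew-adjoint on $E$), each $\eps(v_j)$ is self-adjoint on $\bS_V$, and $\eta$ is a self-adjoint involution on $E$.

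For essential self-adjointness I would repeat the argument of Lemma \ref{lem:selfadjointness-weighted-dirac} and invoke \cite[Theorem \ref{chernoff-theorem}]{JEIndex1}. The natural coercive function on the fibre is $h: V \to \bR$, $h(v):=(1+\norm{v}^2)^{1/2}$, which is fibrewise proper on each $V_x$ and bounded below. Since $\eta \otimes F$ is a zeroth-order endomorphism of $\pi^*(E \otimes \bS_V)$ and hence commutes with multiplication by scalar functions, one has $[B_0,h] = [D_E \otimes 1, h] = -i\,\smb_{D_E}(dh) \otimes 1$, whose norm is bounded by $\norm{dh} \leq 1$. Chernoff's theorem then yields fibrewise essential self-adjointness, which is the content of being a self-adjoint family in the sense of \cite[Definition \ref{defn-selfadjoint}]{JEIndex1}.

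The Clifford-theoretic compatibilities decompose into two routine checks. For oddness with respect to $\eta \otimes \iota$: $D_E$ anticommutes with $\eta$ because $c$ is odd on $E$ and $\eta$ is fibrewise constant, while $F$ anticommutes with $\iota$ because each $\eps(v_j)$ does; both summands of $B_0$ therefore anticommute with $\eta \otimes \iota$. For $\Cl(V)$-antilinearity with respect to the generators $\eta \otimes e(w)$: on the one hand $(D_E \otimes 1)(\eta \otimes e(w)) + (\eta \otimes e(w))(D_E \otimes 1) = \{D_E,\eta\} \otimes e(w) = 0$ since $D_E$ is odd, and on the other $(\eta \otimes F)(\eta \otimes e(w)) + (\eta \otimes e(w))(\eta \otimes F) = 1 \otimes (F\,e(w)+e(w)\,F) = 0$ since each $\eps(v_j)$ anticommutes with every $e(w)$ in $\bS_V$.

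The substantive step is Fredholmness, which I would derive from \cite[Theorem \ref{fredholmness-coercivity}]{JEIndex1} after computing $B_0^2$. The anticommutator $(D_E \otimes 1)(\eta \otimes F)+(\eta \otimes F)(D_E \otimes 1)$ has vanishing first-order part because $c(v_k)\eta + \eta c(v_k) = 0$; the zero-order contribution comes from differentiating the coordinate functions in $F$, giving $\partial_k F = \eps(v_k)$ and hence a bounded endomorphism $\sum_k c(v_k)\eta \otimes \eps(v_k) = -(\eta \otimes 1)\sum_k c(v_k) \otimes \eps(v_k)$ of norm at most $d$. Using $\eta^2 = 1$ together with the relations $\eps(v_i)\eps(v_j) + \eps(v_j)\eps(v_i) = 2\delta_{ij}$ one also obtains $(\eta \otimes F)^2 = \norm{v}^2$, so
\[
B_0^2 = D_E^2 \otimes 1 + \norm{v}^2 - (\eta \otimes 1)\sum_k c(v_k) \otimes \eps(v_k) \geq \norm{v}^2 - d.
\]
Since the function $v \mapsto \norm{v}^2$ is fibrewise proper on $\pi: V \to X$ and bounded below, \cite[Theorem \ref{fredholmness-coercivity}]{JEIndex1} yields that $B_0$ has compact resolvent and is therefore a Fredholm family. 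The only mild subtlety in the whole argument is the bookkeeping of the three interacting Clifford actions (by $c$ on $E$, and by $e,\eps$ on $\bS_V$); all the analytical input is already available from \cite{JEIndex1}.
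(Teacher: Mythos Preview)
Your argument is correct and is precisely the direct verification that the paper acknowledges is possible: the paper writes ``One can prove this Lemma by directly verifying the hypotheses of \cite[Theorem \ref{fredholmness-coercivity}]{JEIndex1}. Instead of doing this, we give an argument that will be used again in the proof of Proposition \ref{prop:linear-index-theorem1}.'' Your computation of $B_0^2 = D_E^2 \otimes 1 + \norm{v}^2 + \sum_j c(v_j)\eta \otimes \eps(v_j)$ and the resulting coercive lower bound $\norm{v}^2 - d$ is exactly that direct verification.

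The paper instead introduces a unitary $\psi = \exp(\frac{\pi}{4}\sum_i c_i e_i)$ in $\Cl(V \oplus V)$ satisfying $\psi c_j = e_j \psi$, $\psi e_j = -c_j \psi$, $\psi \eps_j = \eps_j \psi$, and shows that conjugation by $\psi$ carries $B_0$ to $B_1 := \eta \otimes (D+F)$. Then $B_1^2 = 1 \otimes (D+F)^2 = 1 \otimes (D^2 + \norm{v}^2 + \nu) \geq \norm{v}^2 - d$, which is the same bound obtained more cheaply since only the scalar Bott--Dirac operator enters. The payoff of the paper's route is not in the present lemma but in the next one: in the proof of Proposition \ref{prop:linear-index-theorem1} the same $\psi$ transforms the $\Cl(V)$-action $\eta \otimes e$ into $-c \otimes 1$, after which the kernel projection $1 \otimes p_0$ onto $\ker(D+F)$ manifestly commutes with all structures and splits off the cycle $(E,\eta,c,0)$. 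Your approach establishes the lemma just as well, but you would still need to produce $\psi$ (or an equivalent device) when you come to the linear index theorem.
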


Before we give the proof, let us state the main result of this subsection. Recall the element $\gx(E,\eta,c):= (E,\eta,c,0)$ defined in Example \ref{ex:twisted-K-cycle-withoutoperator}.

\begin{prop}[The linear index theorem]\label{prop:linear-index-theorem1}
There is a canonical concordance $\gy(E,\eta,c) \sim \gx(E,\eta,c)$ of $\theta_\gA$-twisted $K(\gA)$-cycles.
\end{prop}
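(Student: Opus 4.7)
The plan is a spectral decomposition of $B_0$ that exhibits $\gy$ as an orthogonal sum of a finite-rank cycle isomorphic to $\gx$ plus a degenerate cycle; the concordance then follows from the isomorphism-to-concordance principle combined with the contractibility of the subsheaf of degenerate cycles.

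First I would compute $B_0^2=(D_E\otimes 1+\eta\otimes F)^2$. Using the Leibniz rule for $D_E$ applied to the $v$-dependent operator $\eta\otimes F$ (with $\eta$ constant in $v$, $D_E\eta=-\eta D_E$, and $F=\sum v^j\eps_j$ linear in $v$), the cross term collapses to
\[
\{D_E\otimes 1,\;\eta\otimes F\} \;=\; \sum_{j=1}^d c(v_j)\eta\otimes\eps_j \;=:\; \nu_E,
\]
because the anticommutator $\{c(v_j),\eta\}=0$ kills the ``transport'' contributions, leaving only the derivative of $F$. Thus
\[
B_0^2 \;=\; -\Delta_V\otimes 1 \;+\; |v|^2 \;+\; \nu_E.
\]
The families $\phi_j:=c(v_j)\eta$ and $\psi_j:=\eps_j$ both satisfy the positive-signature Clifford relations $\phi_j^2=\psi_j^2=1$, $\{\phi_i,\phi_j\}=\{\psi_i,\psi_j\}=0$ for $i\neq j$, and act on commuting tensor factors.

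The harmonic oscillator $-\Delta_V+|v|^2$ acts fibrewise on $L^2(V_x)$ with discrete spectrum $\{d,d+2,d+4,\ldots\}$, the ground state being the Gaussian $e^{-|v|^2/2}$. A Clifford calculation analogous to the scalar case of Proposition~\ref{prop:index-supersymmetric-oscillator} shows that $\nu_E=\sum_j\phi_j\otimes\psi_j$ has spectrum in $\{-d,-d+2,\ldots,d\}$ on $E\otimes\bS_V$, and that its $(-d)$-eigenspace admits a canonical identification with $(E,\eta,c)$ as graded $\Cl(V)$-$\gA$-module, the Clifford action on the eigenspace being the restriction of $\eta\otimes e$. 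Consequently $\ker B_0 = e^{-|v|^2/2}\otimes\ker(\nu_E+d)$ is a finite-rank subbundle of the continuous field $L^2_X(V;\pi^*(E\otimes\bS_V))$ naturally isomorphic to $E$; and since the next eigenvalue of $B_0^2$ is at least $2$, the spectral gap $\spec(B_0)\cap(-\sqrt{2},\sqrt{2})=\{0\}$ holds uniformly in $X$. By the functional calculus of \cite{JEIndex1}, this gap makes the spectral projection $P:=\mathbf{1}_{\{0\}}(B_0)$ continuous in $X$, so $L^2_X = \ker B_0 \oplus (\ker B_0)^\perp$ is an orthogonal decomposition of continuous fields.

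With respect to this decomposition one has an isomorphism of $K^V(\gA)$-cycles
\[
\gy \;\cong\; (\ker B_0,\,\eta\otimes\iota,\,\eta\otimes e,\,0)\;\oplus\;((\ker B_0)^\perp,\,\ldots,\,B_0|_\perp),
\]
the first summand being isomorphic to $\gx(E,\eta,c)$ by the identification above, the second being degenerate (since $B_0|_\perp$ is invertible). Invoking \cite[Lemma~\ref{lem:properties-concordances}]{JEIndex1} (isomorphisms yield canonical concordances) together with the contractibility of the degenerate subsheaf, which supplies a concordance from the second summand to the zero cycle, we conclude $\gy\sim\gx\oplus 0\sim\gx$. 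The main obstacle is the naturality of the identification $\ker(\nu_E+d)\cong(E,\eta,c)$: already at $d=1$ the induced Clifford action on the eigenspace differs from $c$ by a sign (intertwined by the canonical automorphism $\eta$ of $E$), so the iso needs to be organized carefully. The cleanest route is a frame-independent description using the exterior-algebra structure of $\bS_V=\Lambda^*V^*$ and the natural pairing $\bS_V\otimes\bS_V\cong\End(\bS_V)$, which is essentially the algebraic content of the Bott element localized at the zero section.
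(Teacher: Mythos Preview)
Your overall architecture---split $\gy$ as (finite-rank kernel) $\oplus$ (invertible complement), identify the kernel with $\gx$, and discard the complement as degenerate---is exactly the paper's strategy, and your computation $B_0^2=-\Delta_V\otimes 1+|v|^2+\nu_E$ with the resulting spectral gap is correct. The difference lies in how the kernel is identified with $(E,\eta,c)$, and this is precisely the step you flag as ``the main obstacle'' and leave only sketched.

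The paper does not analyze the $(-d)$-eigenspace of $\nu_E$ directly. Instead it conjugates $B_0$ by the explicit even unitary
\[
\psi=\exp\Bigl(\tfrac{\pi}{4}\sum_{i=1}^d c_i e_i\Bigr)=2^{-d/2}\prod_{i=1}^d(1-e_i c_i)\in\Cl(V\oplus V),
\]
which is frame-independent and satisfies $\psi c_j=e_j\psi$, $\psi e_j=-c_j\psi$, $\psi\eps_j=\eps_j\psi$. This carries $B_0$ to $B_1=\eta\otimes(D+F)$ and the Clifford structure $\eta\otimes e$ to $-c\otimes 1$. Now $E$ is a pure spectator: $\ker B_1=E\otimes\ker(D+F)$, and Proposition~\ref{prop:index-supersymmetric-oscillator} already identifies $\ker(D+F)$ with the trivial even line spanned by the Gaussian. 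The restricted cycle is then $(E,\eta,-c,0)$, and the grading $\eta$ provides the canonical isomorphism to $(E,\eta,c,0)=\gx$. The complement is degenerate by Proposition~\ref{prop:index-supersymmetric-oscillator}(4).

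So the isometry $\psi$ \emph{is} the frame-independent intertwiner you are groping for in your last paragraph; it replaces your proposed analysis of $\ker(\nu_E+d)$ by a reduction to the scalar Bott--Dirac operator. Your route would work too once that eigenspace identification (including compatibility of the $\Cl(V)$-action and the sign fix you noticed at $d=1$) is actually written down, but $\psi$ accomplishes all of this in one line and makes naturality transparent.
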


\begin{proof}[Proof of Lemma \ref{lem:bott-dirac-regularity}]
One can prove this Lemma by directly verifying the hypotheses of \cite[Theorem 2.40]{JEIndex1}. Instead of doing this, we give an argument that will be used again in the proof of Proposition \ref{prop:linear-index-theorem1}. We transform $B_0$ by an isometric isomorphism of $L^2_X (V;\pi^* (E \otimes \bS_{V}))$ into an operator which looks more closely related to the Bott-Dirac operator. Namely, we define the operator $B_1:= \eta \otimes (D+F)$ on $\pi^* (E \otimes \bS_{V})$, using the Bott-Dirac operator $(D+F)$ on $\bS_{V}$. We claim that $B_0$ and $B_1$ are conjugate by an isometry. 

Let $x \in X$ and pick an orthonormal basis $(v_1, \ldots, v_d)$ of $V_x$. To ease notation, we denote by $c_j, e_j, \eps_j$ be the Clifford action of these basis vectors with respect to $c,e,\eps$ on the fibre $(E \otimes \bS_{V})_x$. Then
\begin{equation}\label{formula:harmonic-oscillators}
B_0= \sum_j c_j \partial_j + x^j \eps_j  \text{ and } B_1 = \sum_j e_j \partial_j + x^j \eps_j.
\end{equation}
Let 
\begin{equation}\label{defn:psi}
\psi := \exp (\frac{\pi}{4} \sum_{i=1}^{d} c_i e_i)= \frac{1}{2^{d/2}}\prod_{i=1}^{d} (1 -e_i c_i) \in \Cl (V\oplus V)_x \subset \Cl (V\oplus V \oplus V^-)_x.
\end{equation}
Then $\psi$ is even, $\psi^*\psi=1$, and the relations
\begin{equation}\label{psirelations}
 \psi e_j = -c_j \psi; \; \psi c_j = e_j \psi; \; \psi \eps_j = \eps_j \psi
\end{equation}
hold. Using \eqref{formula:harmonic-oscillators}, we get that
\begin{equation}\label{psi-intertwines-Di}
\psi B_0 \psi^{-1} = \sum_j \psi c_j \psi^{-1} \partial_j + x^j \eps_j =  \sum_j e_j \partial_j + x^j \eps_j =B_1.
\end{equation}
The element $\psi$ does not depend on the choice of the orthonormal basis of $V_x$ (the quickest way to prove this is: observe that rotation in the $v_1-v_2$-plane does not change $\psi$ and neither does permutation of basis vectors, and use that these rotations and permutations generate the orthogonal group $O(V)$). Therefore $\psi$ gives a global isometry of $\pi^* (E \otimes \bS_{V})$.  

Therefore, it is enough to prove that $B_1$ is a self-adjoint Fredholm family. Self-adjointness is proven as in Lemma \ref{lem:selfadjointness-weighted-dirac}, using \cite[Theorem 1.14]{JEIndex1}.
For the Fredholm property, compute
\[
B_1^2 = 1 \otimes (D+F)^2 = 1 \otimes (D^2 + |v|^2 + \nu) \geq |v|^2- d.
\]
Therefore $B_1^2$ is bounded from below by the coercive function $|v|^2-d: V \to \bR$.
By \cite[Theorem 2.40]{JEIndex1}, $B_1$ is Fredholm.
\end{proof}

\begin{proof}[Proof of Proposition \ref{prop:linear-index-theorem1}]
In the first step, we use the isometry $\psi$ from the proof of Lemma \ref{lem:bott-dirac-regularity}: 
\[
\begin{split}
\gy(E, \eta,c)=(L^2_X (V;\pi^* (E \otimes \bS_{V})), \eta \otimes \iota, \eta \otimes e, B_0) \cong\\
(L^2_X (V;\pi^* (E \otimes \bS_{V})), \eta \otimes \iota, -c \otimes 1, B_1)=\\
(L^2_X (V;\pi^* (E \otimes \bS_{V})), \eta \otimes \iota, -c \otimes 1, \eta \otimes (D+F))
\end{split}
\]
(the minus sign in front of $-c\otimes 1$ comes from the relations \eqref{psirelations}). 

Now let $p_0 \in \Kom_{X} (L^2_{\pi} (V; \pi^*\bS_{V}))$ be the projector onto $\ker (D+F)$ (which is a rank $1$ trivial vector bundle on $X$, by Proposition \ref{prop:index-supersymmetric-oscillator} and equivariance) and let 
\[
p=1\otimes p_0\in \Kom_{X, \gA} (L^2_{\pi} (V; \pi^*(E\otimes \bS_{V})))
\]
(to see that $p$ is compact, one uses that $p_0$ is a rank $1$ operator, globally, and applies the definition of a compact operator family \cite[Definition 2.15]{JEIndex1}). Then $p$ is a projection and commutes with the grading, Clifford structure and with $\eta \otimes (D+F)$. Hence we get an equality
\[
\begin{split}
(L^2_{X} (V; \pi^*( E \otimes \bS_{V})), \eta \otimes \iota ,-c \otimes 1 , \eta \otimes (D+F)) =\\
=(\im (p), \eta \otimes \iota ,-c \otimes 1 , (\eta \otimes (D+F))|_{\im (p)}) \oplus (\im (1-p), \eta \otimes \iota ,-c \otimes 1 , (\eta \otimes (D+F))|_{\im (1-p)})=\\
=(E, \eta  ,-c  , 0) \oplus (\im (1-p), \eta \otimes \iota ,-c \otimes 1 , (\eta \otimes (D+F))|_{\im (1-p)})\sim (E, \eta  ,-c  , 0).
\end{split}
\]
The second summand is degenerate, since $((\eta \otimes (D+F))|_{\im (1-p)})^2 \geq 1$ by Proposition \ref{prop:index-supersymmetric-oscillator} (4), and is hence (canonically) concordant to the zero cycle, by \cite[Lemma 3.9]{JEIndex1}. The first summand is isomorphic to $(E, \eta , c ,0)$, via $\eta$, and hence canonically concordant to that cycle.
\end{proof}

\subsection{Proof of the index theorem}

Recall that the topological index $\topind_n: \MT \gA_n \to \bK(\gA)_n$ is defined as $\topind_n = \thom (\gx_n)$. By Proposition \ref{prop:linear-index-theorem1} and Lemma \ref{lem:thom-class-is-spectrum-map}, we therefore have
\[
 \topind_n \sim \thom (\gy_n).
\]
Therefore, to complete the proof of Theorem \ref{thm:index-theorem}, we need to prove the following result.

\begin{lem}
There is a natural concordance $\thom (\gy)_n \sim \ind_n \circ \Lambda_n^{op}$ of maps $\MT \gA_n \to \bK (\gA)_n$ of sheaves.
\end{lem}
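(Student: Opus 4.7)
The plan is to reduce the claimed concordance to a literal natural isomorphism of $K^{n,0}(\gA)$-cycles and then invoke the general principle that an isomorphism of cycles canonically produces a concordance, as in \cite[Lemma \ref{lem:properties-concordances}]{JEIndex1}. The identification will come from the canonical isomorphism of graded Clifford modules $\bS_n \cong \bS_V \otimes \bS_{V^\bot}$ from Definition \ref{defn:cacnonicalclissoefmodules}, applied fibrewise over the base.

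First I would unwind both sides on an element $(U,z,s) \in \MT\gA_n(X)$ with $z=(V,E,\eta,c)\in \cC_\gA(U)$. By Lemma \ref{lem:from-linear-mfds-to-dirac}, $\Lambda_n^{op}(U,z,s)$ is the tuple $(V,\pi,f,\pi^*E,\eta,c,D_E,1)$ with $f(x,w)=w+s(x)$, and the construction of $\ind_n$ in \S\ref{subsec:construction-indexmap} (with moderating function $g\equiv 1$, so $gDg = D_E$) produces the cycle on $X$ given as the extension by zero from $U$ of
\[
\bigl( L^2_U(V;\pi^*(E\otimes \bS_n)),\ \eta\otimes\iota_n,\ \eta\otimes e_n,\ D_E\otimes 1 + \eta\otimes\eps(f)\bigr).
\]
On the other hand, by the definition of $\thom$ in \eqref{thom-iso-as-sheaf-map2} together with the description of $\gy$ from Lemma \ref{lem:bott-dirac-regularity}, $\thom(\gy_n)(U,z,s)$ is $j_!$ applied to
\[
\bigl( L^2_U(V;\pi^*(E\otimes \bS_V))\otimes \bS_{V^\bot},\ (\eta\otimes \iota_V)\otimes \iota_{V^\bot},\ (\eta\otimes e_V)\otimes e_{V^\bot},\ B_0\otimes 1 + (\eta\otimes \iota_V)\otimes \eps(s)\bigr),
\]
where $B_0 = D_E\otimes 1+\eta\otimes F$ and $F=\eps(\id_V)$.

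The second step is to check that the canonical isometric isomorphism $\bS_n \cong \bS_V\otimes \bS_{V^\bot}$ carries the first cycle onto the second. Fibrewise over $x\in U$, the finite-dimensional factor $\bS_{V^\bot_x}$ factors out of the $L^2$, giving $L^2(V_x; E_x\otimes \bS_n) \cong L^2(V_x; E_x\otimes \bS_{V_x})\otimes \bS_{V^\bot_x}$. Under this identification the grading becomes $\iota_n = \iota_V\otimes \iota_{V^\bot}$; using $e_n(v,w) = e_V(v)\otimes 1 + \iota_V\otimes e_{V^\bot}(w)$ for $(v,w)\in V\oplus V^\bot$, the $\Cl^{n,0}$-action $\eta\otimes e_n$ becomes $\eta\otimes e_V\otimes 1$ on $V$ and $\eta\otimes \iota_V\otimes e_{V^\bot}$ on $V^\bot$, which is exactly the Thom Clifford structure. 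Finally, writing $f(x,w)=w+s(x)=(w,s(x))\in V_x\oplus V^\bot_x$, the order-zero multiplication operator $\eps(f)$ decomposes as $\eps_V(w)\otimes 1 + \iota_V\otimes \eps_{V^\bot}(s(x))$, so that $D_E\otimes 1 + \eta\otimes \eps(f)$ equals $B_0\otimes 1 + (\eta\otimes\iota_V)\otimes \eps(s)$ on the nose.

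The identification constructed in this way is manifestly natural in $(X,U,z,s)$ and compatible with the open-inclusion extension-by-zero $j_!$ on both sides, so the whole argument is disciplined bookkeeping of the graded tensor product conventions rather than anything analytical --- the self-adjointness and Fredholm properties on both sides have already been settled in \S\ref{subsec:construction-indexmap} and in Lemma \ref{lem:bott-dirac-regularity}. The only mild subtlety is to track the position of the grading $\eta$ in the graded tensor product of Clifford modules, which lands in the same spot on both sides precisely because the Thom Clifford structure on $V^\bot$ is twisted by $\tilde\eta=\eta\otimes\iota_V$. Feeding the resulting natural isomorphism of cycles into \cite[Lemma \ref{lem:properties-concordances}]{JEIndex1} produces the required natural concordance $\thom(\gy)_n\sim \ind_n\circ \Lambda_n^{op}$.
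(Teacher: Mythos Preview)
Your proof is correct and follows essentially the same approach as the paper: both unwind $\ind_n\circ\Lambda_n^{op}$ and $\thom(\gy_n)$ on a generic element of $\MT\gA_n(X)$ and identify them via the canonical isomorphism $\bS_n\cong\bS_V\otimes\bS_{V^\bot}$, checking that grading, Clifford action, and operator match, and then invoke \cite[Lemma \ref{lem:properties-concordances}]{JEIndex1}. The only cosmetic difference is that the paper transforms $\ind_n(\Lambda_n^{op}(\gv))$ into the form $\thom(\gy_n)(\gv)$, whereas you compute both sides separately and compare.
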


\begin{proof}
As in the proof of Proposition \ref{analytical-index-vs-scanning}, the canonical concordance will be given by a natural isomorphism. Let $X$ be a test manifold and $\gv:=(U,V,\pi,E,\eta,c,s) \in \MT \gA_n(X)$. Recall that $U \subset X$ is open, $\pi:V \subset U \times \bR^n\to U$ is a rank $d$ vector bundle, $(E,\eta,c)\to U$ a bundle of finitely generated projective Hilbert-$\gA$-modules with grading $\eta$ and $\Cl(V)$-action $c$. Finally, $s: U \to V^{\bot}$ is a smooth section with the growth condition (i.e. if $x_n \in U$ converges to $x \in \overline{U} \setminus U$, then $\norm{s(x_n)} \to \infty$).

Let us first compute $\ind_n (\Lambda^{op}_n (\gv))$. The map $\Lambda^{op}_n : \MT \gA_n(X) \to \GRW \gA^{op}_n(X)$ constructed in Lemma \ref{lem:from-linear-mfds-to-dirac} assigns to $\gv$ the element
\[
(V,\pi,f,\pi^* E,\eta,c,D_E,1)\in \GRW\gA^{op}_n (X),
\]
with $\pi: V \to U \subset X$ and $f(x,v):= v+s(x) \in \bR^n$; and $D_E$ is the Dirac operator. Hence
\begin{equation}\label{eqn:indexproof1}
\ind_n (\Lambda_n^{op} (\gv))=(L^2_X (V;\pi^* E \otimes \bS_{n}), \eta \otimes \iota_{\bR^n}, \eta \otimes e_{\bR^n}, D_E \otimes 1+ \eta \otimes \eps(f))\in \bK(\gA)_n(X).
\end{equation}
The submersion $\pi: V\to X$ is the composition of the bundle projection $\pi:V \to U$ with the inclusion map $i: U \to X$. We can rewrite the formula (\ref{eqn:indexproof1}) as 
\begin{equation}\label{eqn:indexproof2}
\ind_n (\Lambda_n^{op} (\gv))=i_!(L^2_U (V;\pi^* E \otimes \bS_{n}), \eta \otimes \iota_{\bR^n}, \eta \otimes e_{\bR^n}, D_E \otimes 1+ \eta \otimes \eps(f))\in \bK (\gA)_n(X).
\end{equation}
Now use the canonical isomorphism $\bS_{n} \cong  \bS_{V} \otimes \bS_{V^\bot}$ (of bundles over $U$), and that $\iota_{\bR^n} = \iota_V \otimes \iota_{V^\bot}$, $e_{\bR^n} = e_V \otimes 1 + \iota_V \otimes e_{V^\bot}$ under this isomorphism. Because $f(x,v):= v+s(x)$, we have $\eps (f)= \eps(\id_V) \otimes 1 + \iota_V \otimes \eps(s)$. Altogether, the right-hand side of (\ref{eqn:indexproof2}) becomes
\[
i_!(L^2_U (V;\pi^* E \otimes \bS_{V} \otimes \bS_{V^\bot} ), \eta \otimes \iota_V \otimes \iota_{V^\bot}, \eta \otimes e_V \otimes e_{V^\bot}, D_E \otimes 1 \otimes 1+ \eta \otimes \eps(\id_V) \otimes 1+ \eta \otimes \iota_V \otimes \eps(s)),
\]
and this is canonically isomorphic to 
\[
i_!(L^2_U (V;\pi^* E \otimes \bS_{V}  )\otimes \bS_{V^\bot}, (\eta \otimes \iota_V) \otimes \iota_{V^\bot}, (\eta \otimes e_V) \otimes e_{V^\bot}, (D_E \otimes 1+ \eta \otimes \eps(\id_V)) \otimes 1+ (\eta \otimes \iota_V )\otimes \eps(s)).
\]
By the formula for the Thom homomorphism\eqref{thom-iso-as-sheaf-map2}, this is the same as $\thom (\gy_n)(\gv)$, as claimed.
\end{proof}

\bibliographystyle{plain}
\bibliography{index}
\end{document}